\newcommand{\I}{\mathbb{I}}
\newcommand{\C}{\mathbb{C}}
\newcommand{\SP}{{\mathbb{S}}}
\newcommand{\R}{\mathbb{R}}
\DeclareMathOperator{\G}{{\mathcal{G}}}
\DeclareMathOperator{\F}{{\mathcal{L}}}
\DeclareMathOperator{\bE}{{\mathbb{E}}}
\DeclareMathOperator{\PP}{{\mathbb{P}}}
\newcommand{\calPP}{{\boldsymbol{\mathcal{P}}}}
\newcommand{\calP}{{\mathcal {P}}}
\newcommand{\GG}{{\mathbb G}}
\newtheorem{theorem}{Theorem}[section]
\newtheorem{lemma}{Lemma}[section]
\newtheorem{proposition}{Proposition}[section]
\newtheorem{corollary}{Corollary}[section]
\newtheorem{remark}{Remark}[section]
\title
{A functional equation whose unknown is {${\calP}([0,1])$}
valued
}
\author{Giacomo Aletti\\
\small ADAMSS Center--Dipartimento di Matematica,\\
\small Universit\`a degli Studi di Milano, Milan, Italy\\
\texttt{giacomo.aletti@unimi.it}
\and Caterina May\\
\small Dipartimento SEMEQ,\\
\small Universit\`a del Piemonte Orientale,
Novara, Italy\\
\texttt{caterina.may@eco.unipmn.it}
\and
Piercesare Secchi\\
\small MOX--Dipartimento di Matematica,\\
\small Politecnico di Milano, Milan, Italy\\
\texttt{piercesare.secchi@polimi.it}}
\date{July 11, 2011}
\begin{document}

\maketitle

\noindent
\emph{Keywords}: Functional equation in unknown distribution functions;
generalized P\'olya urn; reinforced urn process

\noindent
\emph{AMS 2000 subject classifications}: Primary: 62E10;
secondary: 39B52; 62E20


\begin{abstract}
We study a functional equation whose unknown maps a Euclidean space
into the space of probability distributions on \([0,1].\)
We prove existence and uniqueness of its
solution under suitable regularity and boundary conditions, we show
that it depends continuously on the boundary datum, and we
characterize solutions that are diffuse on \([0,1].\)
A canonical solution is obtained by means of a Randomly Reinforced Urn with
different reinforcement distributions having equal means.
The general solution to the functional equation defines a new parametric collection of distributions on \([0,1]\) generalizing the Beta family.
\end{abstract}

\section{Introduction}
The present work treats a particular functional equation whose unknown maps a Euclidean space into the space \(\mathcal{P}([0,1])\) of probability distributions on $[0,1]$.

Consider two probability distributions $\mu$ and $\nu$ on the interval $[0,\beta]$, with $\beta>0$, and assume that \(\mu\) and \(\nu\) have the same
positive mean. 
Then, for \((x,y)\) ranging over the subspace $\SP=[0,\infty)^2\setminus\{(0,0)\}$ of \(\R^2,\) define the following equation with {\em parameters} $\mu$ and $\nu$:
\begin{equation}\label{eq:FEgeneralTOTcompact}
x\int_{0}^\beta (\G(x,y)-\G(x+k,y))\mu(dk) +
y\int_0^\beta (\G(x,y)-\G(x,y+k))\nu(dk) =0;
\end{equation}
the unknown is the function
\[
\G:\SP\rightarrow {\mathcal P}\mbox{([0,1])}.
\]

Without additional constraints or requirements, equation \eqref{eq:FEgeneralTOTcompact} in its complete generality admits infinitely many solutions.
For instance, any constant function $\G$ satisfies \eqref{eq:FEgeneralTOTcompact}.
We will show that under suitable regularity and boundary conditions, the problem described by \eqref{eq:FEgeneralTOTcompact} is well-posed
in the sense that its solution exists, it is unique and depends continuously on the boundary datum. Moreover, we will also prove that the solution depends continuously on the parameters \(\mu\) and \(\nu\) and we will characterize a class of solutions \(\G\) mapping the interior of \(\SP\) into the subspace of probability distributions diffuse on \([0,1].\)


 A particular instance of the problem considered in this paper has been studied in \cite{AlettiMaySecchi07} where it is proved that, when the two parameters $\mu$ and $\nu$ are equal, there exists one and only one continuous solution to \eqref{eq:FEgeneralTOTcompact} that maps the \(x\)-axis and \(y\)-axis borders of $\SP$ in the point mass at 1 or at 0, respectively, and that approaches the point mass at $x/(x+y)$ as $x+y$ tends to infinity. We here extend this result to the case of different parameters \(\mu\) and \(\nu\) with the same mean, and to more general boundary conditions. These will be described by means of a continuous function $\varphi: [0,1] \rightarrow \mathcal{P}([0,1])$ that will represent
the {\em boundary datum} of the problem.


From a probabilistic point of view, \eqref{eq:FEgeneralTOTcompact} is naturally connected to the dynamics
of a two-color randomly reinforced urn  with reinforcement
distributions $\mu$ and $\nu.$

Indeed, for \((x,y) \in \SP,\) consider an urn containing initially $x$ black balls and $y$ white balls.
The urn is sequentially sampled. At time $n=1,2,\dots$ a ball is
drawn from the urn and its color is observed: if the sampled ball
is black, it is replaced in the urn together with a random number
of black balls having distribution \(\mu,\) if the sampled ball is white it is
replaced in the urn together with a random number of white balls
having distribution \(\nu.\) The extra balls added every time the urn is sampled are called reinforcements.
This urn scheme is called a two-color randomly reinforced urn (\emph{RRU}); it has been introduced in \cite{DurhamYu,LiDuhramFlournoy,DurhamFlournoyLi,May.et.al.05,Muliere.et.al.06} and
further studied in \cite{Beggs05,AlettiMaySecchi07,AlettiMaySecchi08,MayFl07,PS07} as a general model for learning through reinforcement with direct applications in statistics as a device for adaptive sampling.

If $\{\G(x, y),(x,y)\in\SP\}$ is a family of distributions on \([0,1]\) parameterized by the elements of \(\SP\)  and \(\gamma\) is a distribution on \(\SP,\) we use
\(\G(X,Y) \wedge \gamma\) to indicate the distribution on \([0,1]\) obtained by mixing the distributions \(\G\) according to \(\gamma.\)   This can be thought of as the distribution of a random value in \([0,1]\) generated through a two-step procedure: first sample \((X,Y)\in \SP\) with distribution \(\gamma\) and then, given \((X,Y)=(x,y),\) sample a random value in \([0,1]\) according to the distribution \(\G(x,y).\)

Now let \((X_1,Y_1)\) be the random number of black and white balls respectively present in a \emph{RRU} after it has been sampled for the first time and indicate with \(\gamma_1(x,y)\) the distribution of \((X_1,Y_1)\) on \(\SP,\) which depends on \(\mu, \nu\) and the initial urn composition \((x,y).\)
Finally let \(A\) to be an operator acting on the distributions \(\G\) and defined as
\[
(A\G)(x,y)=\G(X,Y) \wedge \gamma_1(x,y).
\]

A solution \(\G\) of equation \eqref{eq:FEgeneralTOTcompact} is a fixed point of $A: A\G = \G$.
Indeed we will also show that if \(\G(x,y)\) is the distribution of the limit proportion of black balls of a \emph{RRU} with initial composition \((x,y)\) and reinforcement distributions \(\mu\) and \(\nu,\)  then \(\G\) is a fixed point of \(A\)  satisfying specific boundary conditions.

As a prototypical example, consider an \emph{RRU} whose reinforcement distributions \(\mu\) and \(\nu\) are both point masses at 1; this is a P\'olya urn scheme. It is well known that if \((x,y) \in \SP\) is the initial composition of a P\'olya urn, then the limit proportion of black balls generated by this urn scheme has distribution Beta\((x,y).\) In fact the family of Beta distributions is a fixed point of the operator \(A\) related to the P\'olya urn. In this sense, the general solution of \eqref{eq:FEgeneralTOTcompact} defines a collection of distributions on \([0,1]\) parameterized by elements of \(\SP\) and generalizing the Beta.


In the next section we set notation and terminology, we formally describe the functional equation problem and we state three results concerning its solution; they will be proved in the rest of the paper. Section \ref{sect:RRU} deals with the construction of the {\em canonical solution} to \eqref{eq:FEgeneralTOTcompact} for the special case when the boundary datum \(\varphi(t)\) is the point mass at \(t,\) for all \(t \in [0,1];\) indeed, the canonical solution is obtained by means of a \emph{RRU}.
Canonical solutions are the building blocks for proving existence, uniqueness and regularity properties of the solution to the functional equation problem with a general boundary datum; this will be shown in Section \ref{sect:fixed}. Section \ref{sect:diffuse} describes functional equation problems whose solution maps the interior of \(\SP\) into the subspace of \(\calP([0,1])\) consisting of probability distributions with no point masses. The final Section \ref{section:examples} illustrates some examples extending the solution of equation \eqref{eq:FEgeneralTOTcompact} well beyond the case of the Beta family. Auxiliary technical results have been postponed to the Appendix.

\section{Problem and main results}
In this section we set notation and terminology and we describe the functional equation problem in detail. We also state three main results concerning its solution; they will be proved in the rest of the paper.

\subsection{The Wasserstein metric for spaces of probability distributions}
For any $\beta \in (0,\infty)$, we endow the set $\calP([0,\beta])$ of probability distributions on the real interval $[0,\beta]$ with
the $1$--Wasserstein metric $d_W$ which metrizes weak
convergence. Recall that, for \(\xi_1,\xi_2 \in \calP([0,\beta]),\)
\begin{equation*}
d_W(\xi_1,\xi_2)=
\int_0^\beta|F_{\xi_1}(t)-F_{\xi_2}(t)|dt
=\int_0^1|q_{\xi_1}(t)-q_{\xi_2}(t)|dt,
\end{equation*}
where $F_\xi$ 
 and $q_\xi$ are the cumulative distribution function
 and the quantile function of $\xi \in \calP([0,\beta])$, respectively (see \cite{Gibbs.et.al.02} for more details).
Moreover, by the Kantorovich-Rubinstein Theorem,
\begin{equation}\label{eq:KR}
d_W(\xi_1,\xi_2)=
\inf\{E(|X_1-X_2|):X_1 \sim \xi_1,X_2 \sim \xi_2\}
\end{equation}
where the infimum is taken over all joint distributions for the vector of random variables $(X_1,X_2)$
with marginal distributions equal to $\xi_1$ and $\xi_2,$ respectively.
The metric space $(\calP([0,\beta]),d_W)$ is complete and compact.

\subsection{The set $\calPP$ of parameters}\label{sec:parameters}
For \(0< m_0\leq \beta<\infty,\) endow the cartesian product $\calP([0,\beta])\times\calP([0,\beta])$ with the Manhattan-distance
\[
d_{M}((\mu_1,\nu_1),(\mu_2,\nu_2)) = d_W(\mu_1,\mu_2)+d_W(\nu_1,\nu_2)
\]
and consider the subset  $\calPP$ of couples  ($\mu,\nu$) of probability distributions with support in \([0,\beta]\) having means that are equal and that are both greater than or equal to \(m_0\), i.e.  such that
\begin{equation*}
\int_0^\beta k \mu(dk) = \int_0^\beta k \nu(dk) \ge m_0.
\end{equation*}
The elements of \(\calPP\) will act as the parameters for the functional equation \eqref{eq:FEgeneralTOTcompact}; note that
\(\calPP\) is a closed subset of the metric space $\calP([0,\beta])\times\calP([0,\beta])$ and therefore it is compact.

\subsection{The set $C([0,1],\calP([0,1]))$ of boundary data}
A boundary datum $\varphi$ is defined as a continuous map from $[0,1]$ to $\calP([0,1])$.
We endow the set of boundary data $C([0,1],\calP([0,1]))$ with the $\sup$-distance
\[
d_\infty(\varphi_1,\varphi_2) = \sup_{t\in[0,1]} d_W(\varphi_1(t),\varphi_2(t));
\]
then $(C([0,1],\calP([0,1])),d_\infty)$ is a complete metric space.

From now on, $\delta$ will indicate the element of $C([0,1],\calP([0,1]))$ defined by setting \(\delta(t)=\delta_t\) for \(t \in [0,1],\) where \(\delta_t\) denotes the point mass at \(t.\)

\subsection{The set $C(\SP,\calP([0,1]))$ where solutions are to be found}
Let $\SP=[0, \infty)^2\setminus\{(0,0)\}$ and $C(\SP,\calP ([0,1]))$ be the set of the continuous maps
$\G:\SP\rightarrow {\mathcal P}\mbox{([0,1])}$.

For \(n=1,2,\ldots\) let \(\SP_n=\{(x,y)\in \SP: x+y\geq 1/n\}\) and consider the distance between elements
\(\G_1,\G_2 \in C(\SP_n,\calP ([0,1]))\) defined by
\[
d_n(\G_1,\G_2)= \sup_{(x,y) \in \SP_n
} d_W(\G_1(x,y),\G_2(x,y)).
\]
We then define a new distance \(d\) by setting, for all
\(\G_1,\G_2 \in C(\SP,\calP ([0,1])),\)
\[
 d(\G_1,\G_2)=\sum_{n=1}^\infty \frac{1}{2^{n}} \cdot \frac{d_n(\G_1|_{\SP_n},\G_2|_{\SP_n})}{1+d_n(\G_1|_{\SP_n},\G_2|_{\SP_n})}
\]
where \(\G|_{\SP_n}\) indicates the restriction to \(\SP_n\) of a \(\G \in C(\SP,\calP ([0,1])).\)

The distance \(d\) metrizes the uniform weak convergence in any closed subset of $\SP\bigcup \{(0,0)\}$ which does not
contain the origin. Note that convergence with respect to \(d\) is equivalent to convergence with respect to all \(d_n\) of the corresponding restrictions. The set $(C(\SP,\calP ([0,1])),d)$ is a complete metric space; we will look for elements of this space that are solutions of the functional equation \eqref{eq:FEgeneralTOTcompact}.

\subsection{The functional equation problem}

The Problem object of this paper is now easily stated:\\
{\em given} \[ (\mu,\nu) \in \calPP \;\mbox{and the boundary datum}\; \varphi \in C([0,1],\calP([0,1])),\]
{\em find}
\begin{subequations}\label{eq:system1:F}
\begin{equation}
\label{eq:system1:contin}
\G \in C(\SP,\calP ([0,1]))
\end{equation}
such that, for all \((x,y) \in \SP\),
\begin{align}
\label{eq:system1:func_eq}
&\begin{aligned}
x\int_0^\beta& (\G(x,y)-\G(x+k,y))\mu(dk)\\&+y\int_0^\beta (\G(x,y)-\G(x,y+k))\nu(dk) =0,
 \end{aligned}
\\
\label{eq:system1:ax1}
&\G(0,y) = \varphi(0),\\
\label{eq:system1:ax2}
&\G(x,0) = \varphi(1),\\
\label{eq:system1:ax_inf}
&d_W\Big(\G(x,y),\varphi\big(\frac{x}{x+y}\big)\Big)
\mathop{\longrightarrow}_{x+y\to\infty} 0.
\end{align}
\end{subequations}

\subsection{Main results}

Our first result states that Problem \eqref{eq:system1:F} is well-posed in the sense of Hadamard.

\begin{theorem}\label{theo:FEgeneral}
A solution to Problem \eqref{eq:system1:F} exists, it is unique, and it depends continuously on the boundary datum.
\end{theorem}

In the rest of the paper,
we denote with $\G^\varphi_{(\mu,\nu)}$ the unique solution to Problem~\eqref{eq:system1:F}.
Theorem \ref{theo:FEgeneral} will be proved first in the special case when the boundary datum is the map \(\delta.\) Indeed
$\G^\delta_{(\mu,\nu)}$ is a {\em canonical solution} for the problem since,
for any other boundary datum $\varphi\in C([0,1],\calP([0,1]))$,
we will show that
\begin{equation}\label{eq:soluz_gen}
\G^\varphi_{(\mu,\nu)} =\Psi_\varphi(\G^\delta_{(\mu,\nu)}),
\end{equation}
where $$\Psi_\varphi:C(\SP,\mathcal{P}([0,1]))\to
C(\SP,\mathcal{P}([0,1]))$$ is the linear map defined by setting, for all \({\mathcal G} \in  C(\SP,\mathcal{P}([0,1]))\),
\begin{equation}\label{eq:soluz_gen:2}
\Psi_\varphi({\mathcal G})(x,y)= \int_0^1 \varphi(t) {\mathcal G}(x,y) (dt)
\end{equation}
with \((x,y)\) ranging over \(\SP.\)

The second theorem concerns the continuity of the solution to Problem \eqref{eq:system1:F} when the parameters of the equation are let to change.
Indicate with $\GG^\varphi$ the set of solutions to Problem
\eqref{eq:system1:F} obtained by holding fixed the boundary datum $\varphi\in C([0,1],\calP([0,1]))$ and by letting the parameters \((\mu,\nu)\) range over $\calPP$:
\[
 \GG^\varphi = \Big\{ \G^\varphi_{(\mu,\nu)} \colon
(\mu,\nu) \in \calPP \Big\}.
\]
\begin{theorem}\label{theo:contparameters}
For any given boundary datum $\varphi\in C([0,1],\calP([0,1]))$,
the map
$$(\mu,\nu)\mapsto \G^\varphi_{(\mu,\nu)},$$
from $\calPP$ to $\GG^\varphi$, is uniformly continuous and $\GG^\varphi$ is compact.
\end{theorem}

To prove Theorem \ref{theo:contparameters} we will first show that it holds for canonical solutions, i.e. for $\GG^\delta,$ and then we will prove that the map $\Psi_\varphi$
is continuous.

The third result regards a different regularity property of the solution to Problem \eqref{eq:system1:F}, which depends on the boundary datum \(\varphi\) but not on the parameters \((\mu,\nu).\) Indeed we characterize solutions \(\G^\varphi_{(\mu,\nu)}\)
mapping the interior of \(\SP\) into the class of probability distributions on \([0,1]\) having no point masses; such solutions will be called {\em diffuse.}

A boundary datum \(\varphi \in C([0,1],\calP([0,1]))\) is said to be {\em monotonic} if, for all \(s,t \in [0,1],\) \(s\leq t,\)  \[\varphi(s)\leq_{st}\varphi(t).\]
For a given \(\varphi \in C([0,1],\calP([0,1])),\) indicate with \(\Phi\) the probability distribution on \([0,1]\) obtained as
the convex combination with uniform weights of the members of the family \(\{\varphi(t): t \in [0,1]\}\); i.e. \(\Phi=\int_0^1 \varphi(t) dt.\)

\begin{theorem}\label{theo:nomassgeneral}
Assume that the boundary datum \(\varphi\) is monotonic and let \(\G^\varphi_{(\mu,\nu)}\) be the unique solution to Problem \eqref{eq:system1:F}.
Then:
\begin{enumerate}
\item
If there is \((x_0,y_0)\) in the interior of \(\SP\) such that \(\G^\varphi_{(\mu,\nu)}(x_0,y_0)\)
has no point masses in \([0,1],\)
then \(\Phi=\int_0^1 \varphi(t) dt\) has no point masses in \([0,1].\)
\item
If \(\Phi=\int_0^1 \varphi(t) dt\) has no point masses in \([0,1],\) then
\(\G^\varphi_{(\mu,\nu)}(x,y)\) has no point masses in \([0,1]\) for all \((x,y)\) in the interior of \(\SP.\)
\end{enumerate}
\end{theorem}
Once again, in Section \ref{sect:diffuse}, we will first prove Theorem  \ref{theo:nomassgeneral} for canonical solutions and then for the general solution \(\G^\varphi_{(\mu,\nu)}.\)

\section{Existence of canonical solutions: a Randomly Reinforced Urn}
\label{sect:RRU}

In this section we assume that the boundary datum of Problem \eqref{eq:system1:F} is the map \(\delta \in C([0,1],\calP([0,1]));\)
we prove the existence of a solution for this special instance of the problem, by constructing it through a \emph{RRU} scheme.
This solution will be called {\em canonical} since the solution to Problem \eqref{eq:system1:F} for a general boundary datum  will be obtained by transforming the canonical solution through a suitable map.
While constructing canonical solutions, we will also provide two novel results concerning the continuity of the distribution of the limit proportion of black balls generated by an \emph{RRU} by proving its continuity with respect to the initial urn composition (Lemma~\ref{prop:modulo_Ginterno}) as well as with respect to the reinforcement distributions (Proposition~\ref{prop:PtoG}); the continuity is uniform in any closed subset of \(\SP\) which does not contains the origin.

On a rich enough probability space \((\Omega, \mathcal A, P),\) define two independent infinite sequences of random elements,
\(\{U_n\}\) and \(\{(V_n,W_n)\}\); \(\{U_n\}\) is a sequence of i.i.d. random variables uniformly distributed on \([0,1],\) while
\(\{(V_n,W_n)\}\) is a sequence of i.i.d. bivariate random vectors with components uniformly distributed on \([0,1].\)
Then, define an infinite sequence \(\{(R_X(n),R_Y(n))\}\) of bivariate random vectors by setting,
for all \(n,\)
\[
R_X(n)=q_\mu(V_n)\;\;\mbox{and}\;\; R_Y(n)=q_\nu(W_n),
\]
where $q_\mu$ and $q_\nu$ are the quantile functions of two distributions $\mu$ and $\nu$ having support in $[0,\beta],$ with \(\beta >0.\)
Let \(x\) and \(y\) be two non-negative real numbers such that
\(x+y>0.\)
Set  $X_0=x$, $Y_0=y$, and, for $n= 0,1,2,...$, let
\begin{equation}\label{def1}
\left \{ \begin{array}{lll}
  X_{n+1}&=&X_n+R_{X}(n+1)\I(n+1),\\
  Y_{n+1}&=&Y_n+R_{Y}(n+1)(1-\I(n+1)),
\end{array} \right.
\end{equation}
where the variable \(\I(n+1)\) is the indicator
of the event \(\{U_{n+1}\leq X_{n}(X_{n}+Y_{n})^{-1}\}.\)
The law of \(\{(X_n,Y_n)\}\) is that of
the stochastic process counting, along the sampling sequence, the number of black and
white balls present in
a \emph{RRU} with initial
composition \((x,y)\) and reinforcement distributions equal
to \(\mu\) and $\nu,$ respectively.

For \(n=0,1,2,\ldots\) let $D_n=X_n+Y_n$ be the total number of balls present in the urn at time $n$ and set
$$Z_n(x,y)=X_n/D_n;$$ \(Z_n(x,y)\) represents the proportion
of black balls in a \emph{RRU} with initial composition \((x,y),\) before the \((n+1)\)-th ball is sampled from it.
In \cite{Muliere.et.al.06} it is proved that \(\{Z_n(x,y)\}\) is eventually a bounded sub- or super-martingale, and it thus converges almost surely, and in $L^p$, for $1\leq p\leq\infty,$ to a random variable $Z_\infty(x,y)\in [0,1];$ moreover, when $\mu$ and \(\nu\) have different means, $Z_\infty(x,y)$ is the point mass concentrated in $1$ or $0$, according to whether the mean of \(\mu\) is greater or smaller than that of \(\nu.\)
However, when the means of $\mu$ and $\nu$ are the same, the distribution of $Z_\infty(x,y)$ is unknown, apart from a few special cases, see \cite{AlettiMaySecchi07} and \cite{MayFl07}.

For a given couple \((\mu,\nu) \in \calPP,\) let
$$\F_{(\mu,\nu)}:\SP\to\calP([0,1])$$ be the map which assigns to every
$(x,y) \in \SP$ the distribution of the limit proportion $Z_\infty(x,y)$ of a \emph{RRU} with initial composition \((x,y)\) and reinforcement distributions $\mu$ and $\nu$. 
In the special case where $\mu=\nu$, the map $\F_{(\mu,\mu)}$ has been characterized in \cite{AlettiMaySecchi07} as the unique solution to
Problem~\eqref{eq:system1:F} when the boundary datum is \(\delta.\) We now extend this
result to the general case \((\mu,\nu) \in \calPP.\)

\begin{proposition}\label{prop:soluz RRU}
$\F_{(\mu,\nu)}$ is a solution to Problem~\eqref{eq:system1:F} when its boundary datum is equal to $\delta$.
\end{proposition}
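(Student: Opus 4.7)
The plan is to verify in turn each of the four defining conditions of Problem~\eqref{eq:system1:F} for $\F_{(\mu,\nu)}$: continuity, the functional equation~\eqref{eq:system1:func_eq}, the axis values~\eqref{eq:system1:ax1}--\eqref{eq:system1:ax2}, and the behavior at infinity~\eqref{eq:system1:ax_inf}. The axis conditions are immediate from the construction: if $x=0$ then $Z_0=0$ and the indicator $\I(n+1)$ of $\{U_{n+1}\leq X_n/(X_n+Y_n)\}$ vanishes almost surely for every $n\geq 0$, so $X_n\equiv 0$, $Z_n\equiv 0$, and $Z_\infty(0,y)=0$ a.s.; the case $y=0$ is symmetric.

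For the functional equation, I would exploit the Markov structure of the RRU. Conditioning on the first draw and the first reinforcement, the residual process from time $1$ onward has the law of a fresh RRU started from the updated composition, because the triples $\{(U_n,V_n,W_n):n\geq 2\}$ are i.i.d.\ and independent of $(U_1,V_1,W_1)$. Weighting the two outcomes of the first draw by $x/(x+y)$ and $y/(x+y)$ and integrating the first reinforcement against $\mu$ or $\nu$ yields
\[
\F_{(\mu,\nu)}(x,y)=\frac{x}{x+y}\int_0^\beta \F_{(\mu,\nu)}(x+k,y)\,\mu(dk)+\frac{y}{x+y}\int_0^\beta \F_{(\mu,\nu)}(x,y+k)\,\nu(dk),
\]
and multiplying by $x+y$ and rearranging gives~\eqref{eq:system1:func_eq} exactly.

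For the asymptotic condition~\eqref{eq:system1:ax_inf}, I will invoke that, because $\mu$ and $\nu$ share the same mean, $\{Z_n\}$ is eventually a bounded martingale (as recalled from \cite{Muliere.et.al.06}), so $\bE[Z_\infty(x,y)]=x/(x+y)$. Combined with a variance bound of the form $\mathrm{Var}(Z_\infty(x,y))\leq c\beta/(x+y)$, Chebyshev's inequality forces concentration of $Z_\infty(x,y)$ at $x/(x+y)$ uniformly in the ratio $x/(x+y)$, which in turn gives $d_W(\F_{(\mu,\nu)}(x,y),\delta_{x/(x+y)})\to 0$ as $x+y\to\infty$.

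The main obstacle is the continuity of $\F_{(\mu,\nu)}$ on $\SP$. My approach is a coupling: two RRUs started at nearby compositions $(x_1,y_1),(x_2,y_2)$ are driven by the same sequences $\{U_n\},\{V_n\},\{W_n\}$, so their proportions $Z_n^{(1)},Z_n^{(2)}$ evolve in lockstep. A recursive estimate on $\bE|Z_n^{(1)}-Z_n^{(2)}|$, using that the equal-means assumption makes the coupled dynamics non-expansive in expectation, then controls $\bE|Z_\infty^{(1)}-Z_\infty^{(2)}|$ by the initial discrepancy, and the Kantorovich--Rubinstein formula~\eqref{eq:KR} converts this into a bound on $d_W(\F_{(\mu,\nu)}(x_1,y_1),\F_{(\mu,\nu)}(x_2,y_2))$. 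Continuity at axis points is inherited from the deterministic limits noted above. I expect verifying that this synchronous coupling is genuinely contractive, together with the variance estimate underlying~\eqref{eq:system1:ax_inf}, to be the bulk of the technical work, plausibly delegated to the Appendix.
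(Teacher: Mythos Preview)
Your outline for the functional equation and the axis values matches the paper's argument. The two remaining pieces, however, each contain a gap.

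For condition~\eqref{eq:system1:ax_inf} you assert that equal means make $\{Z_n\}$ a martingale, so that $\bE[Z_\infty(x,y)]=x/(x+y)$. This is false when $\mu\neq\nu$: the reference \cite{Muliere.et.al.06} only gives that $\{Z_n\}$ is \emph{eventually} a sub- or super-martingale, and the one-step compensator
\[
\bE[\Delta Z_{n+1}\mid\mathcal A_n]=Z_n(1-Z_n)\,\bE\!\Big[\tfrac{R_X/D_n}{1+R_X/D_n}-\tfrac{R_Y/D_n}{1+R_Y/D_n}\,\Big|\,\mathcal A_n\Big]
\]
does not vanish merely because $\bE R_X=\bE R_Y$. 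The paper instead controls both pieces of the Doob decomposition $Z_n=Z_0+M_n+A_n$ separately (Lemmas~\ref{sub_Martingale} and~\ref{bracket} in the Appendix), obtaining $\bE(\sup_r|A_r|)\leq\beta/D_0$ and $\bE(\langle M\rangle_\infty)\leq\beta/D_0$, which together yield $d_W(\F(x,y),\delta_{x/(x+y)})\leq 2\sqrt{\beta/(x+y)}$ for $x+y\geq 2\beta$ (Lemma~\ref{F all'infinito}). Your variance-plus-Chebyshev scheme can be salvaged only after you first bound the drift term $A_n$.

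For continuity, the synchronous coupling is the right tool, but it is not non-expansive: the paper's estimate is $\bE|Z_N(x,y)-Z_N(\bar x,\bar y)|\leq (1+N)\,(|x-\bar x|+|y-\bar y|)/\min(x+y,\bar x+\bar y)$, which \emph{grows} with $N$, so one cannot pass directly to $Z_\infty$. The missing ingredient is a uniform $L^1$ bound $\bE|Z_n-Z_\infty|\leq\epsilon$ for all $n\geq N(\epsilon,n_0)$ on each $\SP_{n_0}$ (Lemma~\ref{lem:Z_l,F}, itself derived from Lemma~\ref{F all'infinito} via the Markov property). One then freezes $N$ large enough to make the tails $\bE|Z_\infty-Z_N|$ small and applies the coupling only up to that fixed time $N$ (Lemma~\ref{prop:modulo_Ginterno}).
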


In order to prove Proposition \ref{prop:soluz RRU} we need some auxiliary results;
when they do not depend on the parameters $(\mu,\nu)\in\calPP,$ and there is no place for misunderstanding,
we write $\F$ for $\F_{(\mu,\nu)}$. Some technicalities connected with the Doob's decomposition of the process $\{Z_n\}$ have been postponed to the Appendix.

The distance between \(\F,\) evaluated at $(x,y),$ and the boundary datum, evaluated at \(x/(x+y),\) is controlled
in the following lemma; this distance is uniformly bounded, provided that the size of the urn initial composition
is sufficiently large.

\begin{lemma}\label{F all'infinito}
If $x+y\geq 2\beta$,
\begin{equation*}
 d_W\big(\F(x,y),\ \delta_{\frac x {x+y}}\big)< 2\sqrt{\frac{\beta}{x+y}}.
\end{equation*}
\end{lemma}
\begin{proof}
Note that, by \eqref{eq:KR},
$d_W\big(\F(x,y), \delta_{\frac x {x+y}}\big)=\bE(|Z_\infty(x,y)-\frac{x}{x+y}|)$.
Moreover, $\bE(|Z_\infty(x,y)-\frac{x}{x+y}|)=\bE(|A_\infty(x,y)+M_\infty(x,y)|)$,
where $(A_n)_n$ and $(M_n)_n$ are the Doob's decomposition processes
(see Appendix) of $(Z_n)_n$. Note that $\bE(|M_\infty|)\leq
\sqrt{\bE(\langle M\rangle_{\infty})}$
by Jensen inequality and thus,
from triangular inequality,
Lemma~\ref{sub_Martingale}, Lemma~\ref{bracket} with $n=0$,
and
because $x+y\geq 2\beta$
implies $\sqrt{\frac{\beta}{x+y}}<1$,
we get
\begin{equation*}
d_W \big(\F(x,y),\ \delta_{\frac x {x+y}}\big) \leq
\frac{\beta}{x+y} +\sqrt{\frac{\beta}{x+y}} <
\sqrt{\frac{\beta}{x+y}} + \sqrt{\frac{\beta}{x+y}} .
\end{equation*}
\end{proof}

The Markov inequality together with Lemma~\ref{F all'infinito} imply the following
corollary.
\begin{corollary}\label{cor:scostamento}
If $x+y\geq  2\beta,$
\begin{equation*}
\PP\Big(\big|Z_\infty(x,y)-\frac{x}{x+y}\big|>h_0\Big)\leq
\frac{2}{h_0}\sqrt{\frac{\beta}{x+y}}
\end{equation*}
for every \(h_0>0.\)
\end{corollary}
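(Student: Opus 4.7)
The plan is essentially a one-line deduction, exactly as the statement of the corollary advertises. First I would observe that, by the Kantorovich-Rubinstein representation \eqref{eq:KR} applied with the degenerate law $\delta_{x/(x+y)}$ (for which there is only one possible coupling), one has the exact identity
\[
d_W\bigl(\F(x,y),\,\delta_{\frac{x}{x+y}}\bigr) = \bE\Bigl(\bigl|Z_\infty(x,y) - \tfrac{x}{x+y}\bigr|\Bigr).
\]
This is the same identity already used in the proof of Lemma \ref{F all'infinito}.

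Next I would apply Markov's inequality to the nonnegative random variable $|Z_\infty(x,y) - x/(x+y)|$: for any $h_0 > 0$,
\[
\PP\Bigl(\bigl|Z_\infty(x,y) - \tfrac{x}{x+y}\bigr| > h_0\Bigr) \leq \frac{1}{h_0}\,\bE\Bigl(\bigl|Z_\infty(x,y) - \tfrac{x}{x+y}\bigr|\Bigr) = \frac{1}{h_0}\,d_W\bigl(\F(x,y),\,\delta_{\frac{x}{x+y}}\bigr).
\]
Finally, since the hypothesis $x+y \geq 2\beta$ is in force, I invoke Lemma \ref{F all'infinito} to bound the Wasserstein distance on the right by $2\sqrt{\beta/(x+y)}$, yielding the claimed inequality.

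There is no real obstacle here: both ingredients (the Wasserstein-to-expectation identity for a Dirac target and Markov's inequality) are standard, and the Wasserstein bound is exactly what Lemma \ref{F all'infinito} provides. The only point worth noting is that the statement of the corollary uses a strict inequality in the probability, so Markov is applied in its standard strict form, and the final $\leq$ in the bound is immediate from $d_W < 2\sqrt{\beta/(x+y)}$.
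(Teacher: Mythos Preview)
Your proposal is correct and follows exactly the approach indicated in the paper, which simply states that the corollary is implied by Markov's inequality together with Lemma~\ref{F all'infinito}. You have merely spelled out the one-line computation, including the identification of the Wasserstein distance with the expectation via \eqref{eq:KR}, which the paper already used in the proof of Lemma~\ref{F all'infinito}.
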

\begin{lemma}\label{lem:Z_l,F}
For all $n_0\ge 1$ and $\epsilon>0$, there is $N=N(\epsilon,n_0)$
such that,
\[
\bE\big(|Z_n(x,y)-Z_\infty(x,y)|\big)\leq \epsilon,
\]
if $n\geq N$ and $x+y\geq 1/n_0.$
\end{lemma}
\begin{proof}
Equation \eqref{eq:1suDN} yields, for all $t>0$,
\begin{equation}\label{eq:DN>t}
\PP (D_n<t) = \PP\Big(\frac{1}{D_n}
>\frac{1}{t}\Big)\leq t
\bE \frac{1}{D_n} < t\frac{1+n_0(\beta-m_0)}{m_0(n-1)+\beta}
\end{equation}
where $m_0$ is given in Section~\ref{sec:parameters}.
Set
\begin{equation}\label{eq:t1}
 t=\max\{16\beta/\epsilon^2,2\beta\}
\end{equation}
and
\begin{equation}\label{eq:N1}
N
\geq \frac{\frac{2t}{\epsilon} (1+n_0(\beta-m_0) )-\beta}{m_0}+1.
\end{equation}
From \eqref{eq:DN>t} and \eqref{eq:N1}, we get
\begin{equation}\label{eq:P(DN>t)}
 \PP(D_N<t)<\frac{\epsilon}{2}.
\end{equation}
Moreover, since the process \(\{(X_n,Y_n)\}\) is Markov, it follows from Lemma~\ref{F all'infinito} and \eqref{eq:t1} that, for
$n\geq N$ and $\omega\in\{D_N \geq t\}$,
\begin{equation}\label{dW(F(XN),delta)}
\bE\Big(|Z_\infty-Z_n|\Big|(X_n,Y_n)\Big)(\omega) =
d_W\Big(\F(X_n(\omega),Y_n(\omega)),\
\delta_{\frac {X_n(\omega)} {X_n(\omega)+Y_n(\omega)}}\Big)
\leq \frac{\epsilon}{2}.
\end{equation}
Since $\{D_{n+1}< t\}\subseteq \{D_{n}< t\}$ for all \(n,\)
\eqref{eq:P(DN>t)} and \eqref{dW(F(XN),delta)} imply that
\begin{multline*}
\bE[|Z_\infty-Z_n|]= \bE[|Z_\infty-Z_n|;\{D_n\geq t\}]+\bE[|Z_\infty-Z_n|;\{D_n<t\}]\\
\begin{aligned}
& \leq \bE\big(\bE(|Z_\infty-Z_n|1_{\{D_n \geq t\}}\big|(X_n,Y_n))\big)+\PP(D_N<t)\\
& \leq \epsilon
\end{aligned}
\end{multline*}
for \(n\geq N.\)
\end{proof}

The next result can be read as
a bound on the modulus of continuity of \(\F\) when evaluated at the inner points of \(\SP.\)

\begin{lemma}\label{prop:modulo_Ginterno}
For all $n_0\ge 1$ and $\epsilon>0$, there is
$
\eta=\eta(\epsilon,n_0),
$
increasing with $\epsilon$ and $1/n_0,$ such that
$$ d_W(\F(x,y),\F(\bar x,\bar y)) < \epsilon,$$
  if $|x-\bar x|+|y-\bar y|<\eta$
 and $\min\{x+y,\bar x+\bar y\}\geq 1/n_0.$
\end{lemma}

\begin{proof}
Let $N=N(\epsilon/4,n_0)$ be given by
Lemma~\ref{lem:Z_l,F}.
Then:
\begin{eqnarray*}
\lefteqn{
d_W(\F(x,y),\F(\bar x,\bar y))}\\
&& 
\leq
E[|Z_\infty(x,y)-Z_N(x,y)|]+ E[|Z_\infty(\bar x,\bar y)-Z_N(\bar x,\bar y)|]
\\
&& 
\qquad\qquad +E[|Z_N( x,y)-Z_N(\bar x,\bar y)|] \\
&& 
\leq \frac{\epsilon}{2} + E[|Z_N( x,y)-Z_N(\bar x,\bar y)|].
\end{eqnarray*}
For controlling the last term,
we adopt a coupling argument as in \cite{AlettiMaySecchi07}.
Consider two different randomly reinforced urns, the first one with initial composition $(x,y)$ and second one with $(\bar x,\bar y)$.
The two urns are coupled in the sense that the same
processes \(\{U_n\}\) and \(\{(V_n,W_n)\}\)
generate both \(\{(X_n(x,y),Y_n(x,y))\}\) and \(\{(X_n(\bar x,\bar
y),Y_n(\bar x,\bar y))\) according to the dynamics described in
\eqref{def1}. 
With the same arguments as in
\cite[pages 701-702]{AlettiMaySecchi07}, one may show  that
$$\bE[|Z_N(x,y)-Z_N(\bar x, \bar y)|]\leq (1+N)\dfrac{|x-\bar x|+|y-\bar y|}{\min\{x+y,\bar x + \bar y\}};$$
therefore, if $\eta \leq \frac{\epsilon}{2(1+N)n_0}$,
$$\bE[|Z_N(x,y)-Z_N(\bar x, \bar y)|]\leq \frac{\epsilon}{2}.$$
\end{proof}

\begin{proof}[Proof of Proposition \ref{prop:soluz RRU}]
By considering the conditional distribution of $Z_\infty(x,y),$ given $\I(1), R_X(1)$ and $R_Y(1)$, and taking the expected values,
one immediately verifies that $\F_{(\mu,\nu)}$ satisfies
equation \eqref{eq:system1:func_eq} for all \((x,y) \in \SP.\)
Conditions \eqref{eq:system1:ax1} and \eqref{eq:system1:ax2} are also easily verified when $\varphi=\delta.$
Finally, \eqref{eq:system1:contin} and \eqref{eq:system1:ax_inf}
are consequences of Lemma~\ref{prop:modulo_Ginterno}
and of Lemma~\ref{F all'infinito}, respectively.
\end{proof}

The next result proves a further regularity property of $\F_{(\mu,\nu)}.$
\begin{proposition}\label{prop:PtoG}
The map $$(\mu,\nu)\mapsto \F_{(\mu,\nu)},$$
from $(\calPP, d_M)$ to $(C(\SP,\calP([0,1])),d)$, is uniformly continuous.
\end{proposition}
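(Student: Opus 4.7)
The plan is to reduce uniform continuity in $(\mu,\nu)$ to a uniform-in-$(x,y)$ bound on each $\SP_{n_0}$, and then to control the resulting expression by a pathwise coupling of two RRUs driven by the same randomness $\{U_n\}$, $\{(V_n,W_n)\}$ used in Section~\ref{sect:RRU}. Since $d$ gives geometric weight $2^{-n}$ to $d_n$, for any $\epsilon>0$ it suffices to find $n_0$ with $\sum_{n>n_0} 2^{-n}<\epsilon/2$ and then show that, for each fixed $n\leq n_0$, one has $d_n(\F_{(\mu_1,\nu_1)},\F_{(\mu_2,\nu_2)})<\epsilon/2$ whenever $d_M((\mu_1,\nu_1),(\mu_2,\nu_2))<\eta$ for some $\eta=\eta(\epsilon,n_0)$ that does not depend on the particular parameters. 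By Kantorovich--Rubinstein, this reduces to bounding $\bE|Z_\infty^{(1)}(x,y)-Z_\infty^{(2)}(x,y)|$ uniformly over $(x,y)\in\SP_{n_0}$, where $Z_\infty^{(i)}(x,y)$ is the limit proportion of the RRU with parameters $(\mu_i,\nu_i)$.

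I would next observe that the constant $N$ produced by Lemma~\ref{lem:Z_l,F} depends only on $\epsilon$, $n_0$, $\beta$ and $m_0$, hence is uniform across $\calPP$. Choosing $N=N(\epsilon/3,n_0)$ for both urns yields
\[
\bE|Z_\infty^{(1)}(x,y)-Z_\infty^{(2)}(x,y)| \leq \tfrac{2\epsilon}{3} + \bE|Z_N^{(1)}(x,y) - Z_N^{(2)}(x,y)|,
\]
so the task reduces to controlling the last term by $\epsilon/3$ in terms of $d_M$.

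For the coupling step, I would feed the two RRUs with common uniform variables $\{U_n\}$, $\{(V_n,W_n)\}$ and define the reinforcement streams by the quantile couplings $R_X^{(i)}(n)=q_{\mu_i}(V_n)$, $R_Y^{(i)}(n)=q_{\nu_i}(W_n)$, so that $\bE|R_X^{(1)}(n)-R_X^{(2)}(n)|=d_W(\mu_1,\mu_2)$ and $\bE|R_Y^{(1)}(n)-R_Y^{(2)}(n)|=d_W(\nu_1,\nu_2)$, while sharing $U_{n+1}$ gives $\bE\bigl[|\I^{(1)}(n+1)-\I^{(2)}(n+1)|\,\bigl|\,\mathcal{F}_n\bigr]=|Z_n^{(1)}-Z_n^{(2)}|$. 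Starting both urns at the same $(x,y)\in\SP_{n_0}$ and using the elementary inequality $|R^{(1)}\I^{(1)}-R^{(2)}\I^{(2)}|\leq |R^{(1)}-R^{(2)}|+\beta|\I^{(1)}-\I^{(2)}|$, together with the independence of $V_{n+1},W_{n+1}$ from $U_{n+1}$ and from $\mathcal{F}_n$, would give the recursion
\[
a_{n+1}\leq a_n + d_M((\mu_1,\nu_1),(\mu_2,\nu_2)) + 2\beta\,\bE|Z_n^{(1)}-Z_n^{(2)}|,
\]
with $a_n=\bE|X_n^{(1)}-X_n^{(2)}|+\bE|Y_n^{(1)}-Y_n^{(2)}|$. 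Since $D_n^{(i)}\geq x+y\geq 1/n_0$, the identity $Z=X/D$ yields $\bE|Z_n^{(1)}-Z_n^{(2)}|\leq 2n_0\,a_n$, so $a_{n+1}\leq (1+4\beta n_0)\,a_n+d_M$. Iterating this Gronwall-type inequality from $a_0=0$ gives $a_N\leq C(N,n_0,\beta)\,d_M$, so choosing $d_M$ sufficiently small forces $\bE|Z_N^{(1)}-Z_N^{(2)}|<\epsilon/3$.

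The main obstacle is the coupled induction on the deviations $a_n$: the common $U_{n+1}$ correlates the indicators with the past through the proportions, so the coefficient in the recursion depends on $n_0$ and the bound grows geometrically in $N$. Since $N$ is chosen \emph{before} $d_M$ is made small, this geometric growth is harmless, but it does mean that $\eta$ depends delicately on $n_0$, reflecting the expected deterioration of the modulus of continuity as $x+y\to 0$. What makes the whole argument yield \emph{uniform} continuity is precisely that every constant appearing along the way ($N$, the Gronwall factor, the quotient-rule factor $2n_0$) is determined by $\epsilon$, $n_0$, $\beta$ and $m_0$ alone, and not by the individual parameters $(\mu_i,\nu_i)\in\calPP$.
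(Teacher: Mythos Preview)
Your argument is correct and complete. Both your proof and the paper's share the same first step: use Lemma~\ref{lem:Z_l,F} (whose constant $N$ depends only on $\epsilon,n_0,\beta,m_0$, hence is uniform over $\calPP$) to reduce the problem to a finite--time estimate $\bE|Z_N^{(1)}(x,y)-Z_N^{(2)}(x,y)|$ on $\SP_{n_0}$. The approaches diverge in how that finite--time estimate is obtained. The paper works at the level of the one--step transition operator $A(\mathcal{H},(\mu,\nu))(x,y)=\frac{x}{x+y}\!\int \mathcal{H}(x+k,y)\mu(dk)+\frac{y}{x+y}\!\int \mathcal{H}(x,y+k)\nu(dk)$ acting on $C(\SP,\calP([0,1]))$: it shows that $A$ is non--expansive in $\mathcal{H}$ and Lipschitz in $(\mu,\nu)$ when $\mathcal{H}$ is Lipschitz, that the iterates $A^j(\mathcal{H}_0,(\mu,\nu))$ starting from $\mathcal{H}_0(x,y)=\delta_{x/(x+y)}$ retain Lipschitz constant $n$ on $\SP_n$, and then telescopes to get the \emph{linear} bound $d_n(A^N(\mathcal{H}_0,(\mu_1,\nu_1)),A^N(\mathcal{H}_0,(\mu_2,\nu_2)))\le nN\,d_M$. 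You instead run a pathwise coupling of the two urns with shared $U_n,V_n,W_n$ and derive a Gronwall recursion on $a_n=\bE|X_n^{(1)}-X_n^{(2)}|+\bE|Y_n^{(1)}-Y_n^{(2)}|$, which yields an \emph{exponential} bound $a_N\le \tfrac{(1+4\beta n_0)^N-1}{4\beta n_0}\,d_M$. Your route is more elementary and closer in spirit to the coupling already used in Lemma~\ref{prop:modulo_Ginterno} (there for varying initial conditions rather than varying parameters); the paper's operator formulation is more abstract but gives the sharper linear dependence on $N$. Since in both arguments $N$ is fixed \emph{before} $d_M$ is made small, the exponential loss in your bound is harmless for the conclusion.
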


\begin{proof}
Let $A:C(\SP,\calP([0,1]))\times \calPP\to C(\SP,\calP([0,1]))$
be the operator defined by setting, for every \(\mathcal H \in C(\SP,\calP([0,1]))\) and \( (\mu,\nu) \in \calPP,\)
\begin{gather*} 
A({\mathcal H},(\mu,\nu))(x,y) =
\tfrac{x}{x+y}\int_0^{\beta}{\mathcal H}(x+k,y)\mu(dk) +
\tfrac{y}{x+y}\int_0^{\beta}{\mathcal H}(x,y+k)\nu(dk)
\\ 
= \tfrac{x}{x+y}\int_0^1{\mathcal H}(x+q_\mu(t),y)dt +
\tfrac{y}{x+y}\int_0^1{\mathcal H}(x,y+q_\nu(t))dt,
\end{gather*}
where \((x,y)\) ranges over \(\SP.\)

Let $n\ge 1$. Then
\begin{equation}\label{eq:opA1r}
d_n(A({\mathcal H}_1,(\mu,\nu))|_{\SP_n},A({\mathcal H}_2,(\mu,\nu))|_{\SP_n}) \leq d_n({\mathcal H}_1|_{\SP_n}, {\mathcal H}_2|_{\SP_n}),
\end{equation}
for every \({\mathcal H}_1, {\mathcal H}_2 \in C(\SP,\calP([0,1]))\) and
\( (\mu,\nu) \in \calPP.\)
Indeed, for every $(x,y) \in \SP_n,$
\[
\begin{aligned}
d_W (A({\mathcal H_1}&,(\mu,\nu))(x,y), A({\mathcal H_2},(\mu,\nu))(x,y))\\
&\leq \frac{x}{x+y}\int_0^{\beta} d_W ({\mathcal H_1}(x+k,y), {\mathcal H_2}(x+k,y))
\mu(dk) \\
&\qquad +\frac{y}{x+y}\int_0^{\beta} d_W ({\mathcal H_1}(x,y+k), {\mathcal
H_2}(x,y+k)) \nu(dk)
\\
&\leq \sup_{(x',y')\in \SP_n} d_W ({\mathcal
H_1}(x',y'), {\mathcal H_2}(x',y')).
\end{aligned}
\]
Moreover, if ${\mathcal H} \in C(\SP,\calP([0,1])) $ is Lipschitz on \(\SP_n\)
with Lipschitz constant $K_n,$ then, for every \((\mu_1,\nu_1), (\mu_2,\nu_2) \in \calPP,\)
\begin{equation}\label{eq:opA2r}
d_n(A({\mathcal H},(\mu_1,\nu_1))|_{\SP_n},A({\mathcal H},(\mu_2,\nu_2))|_{\SP_n})
\leq
K_nd_M((\mu_1,\nu_1),(\mu_2,\nu_2)),
\end{equation}
since, for every $(x,y) \in \SP_n,$
\begin{equation}\label{eq:opA2rbis}
\begin{aligned}
d_W (A({\mathcal H}&,(\mu_1,\nu_1))(x,y),A({\mathcal H},(\mu_2,\nu_2))(x,y))\\
&\leq \frac{x}{x+y}\int_0^1 d_W ({\mathcal H}(x+q_{\mu_1}(t),y), {\mathcal
H}(x+q_{\mu_2}(t),y))
dt \\
&\qquad +\frac{y}{x+y}\int_0^1 d_W ({\mathcal H}(x,y+q_{\nu_1}(t)),
{\mathcal H}(x,y+q_{\nu_2}(t)))
dt
\\
&\leq K_n (\tfrac{x}{x+y}d_W(\mu_1,\mu_2)+\tfrac{y}{x+y}d_W(\nu_1,\nu_2)) \\
&\leq K_n d_{M}((\mu_1,\nu_1),(\mu_2,\nu_2)).
\end{aligned}
\end{equation}

Now, for every \(\mathcal H \in C(\SP,\calP([0,1]))\) and \( (\mu,\nu) \in \calPP,\) set \(A^0({\mathcal H},(\mu,\nu))={\mathcal H}\) and, for \(N=1,2,...\) define iteratively
\[
A^N({\mathcal H},(\mu,\nu))=A(A^{N-1}({\mathcal H},(\mu,\nu)),(\mu,\nu)).
\]
Consider \({\mathcal H}_0 \in C(\SP,\calP([0,1]))\) defined by setting $\mathcal H_0 (x,y) = \delta(\frac{x}{x+y})$ for every
\((x,y) \in \SP;\) then
$Z_0^{(\mu,\nu)}(x,y)$  has distribution $\mathcal H_0 (x,y),$
while, for \(N=1,2,...,\) $Z_N^{(\mu,\nu)}(x,y)$ has distribution $A^N(\mathcal H_0,(\mu,\nu))(x,y),$
where, for clarity of exposition, the exponent of the \(Z\) variables is evidence for the reinforcement distributions of the \(RRU\) under consideration.
Note that, for $n\ge 1$, ${\mathcal H_0}$ is a Lipschitz map from
$\SP_n$ to $\calP([0,1])$
with Lipschitz constant $n$. Moreover, it is not difficult to show, with computations analogous to those appearing in \eqref{eq:opA2rbis}, that the operator $A$ preserves the Lipschitz property with the same constant; hence, for $(\mu,\nu) \in \calPP$ and \(N=1,2,...,\) $A^N({\mathcal H_0},(\mu,\nu))$ is a Lipschitz map from $\SP_n$ to $\calP([0,1])$
with Lipschitz constant $n$.

Let \((\mu_1,\nu_1), (\mu_2,\nu_2) \in \calPP,\) \(n,N\geq 1\) and, for ease of notation, set
$${\mathcal H_i}=A^{N-1}({\mathcal H_0},(\mu_i,\nu_i)),$$ for $i=1,2;$ then
\begin{equation}\label{eq:A^N+1}
\begin{aligned}
d_n(A^{N}({\mathcal H_0},(\mu_1,\nu_1))|_{\SP_n},&A^{N}({\mathcal H_0},(\mu_2,\nu_2))|_{\SP_n}) \\
&=
d_n(A({\mathcal H_1},(\mu_1,\nu_1))|_{\SP_n},A({\mathcal H_2},(\mu_2,\nu_2))|_{\SP_n}) \\
& \leq d_n(A({\mathcal H_1},(\mu_1,\nu_1))|_{\SP_n},A({\mathcal H_2},(\mu_1,\nu_1)|_{\SP_n})) \\
& \qquad + d_n(A({\mathcal H_2},(\mu_1,\nu_1))|_{\SP_n},A({\mathcal H_2},(\mu_2,\nu_2))|_{\SP_n}) \\
& \leq d_n({\mathcal H_1}|_{\SP_n},{\mathcal H_2}|_{\SP_n}) + n
d_{M}((\mu_1,\nu_1),(\mu_2,\nu_2)),
\end{aligned}
\end{equation}
the last inequality being a consequence of \eqref{eq:opA1r} and
\eqref{eq:opA2r}.
By iteratively applying \eqref{eq:A^N+1}, it follows that
\[
d_n(A^N({\mathcal H_0},(\mu_1,\nu_1)),A^N({\mathcal H_0},(\mu_2,\nu_2))) \leq
nN d_{M}((\mu_1,\nu_1),(\mu_2,\nu_2)).
\]
Therefore, for every $n\geq 1$ and $\epsilon>0$, if $N=N(\epsilon,n)$ is chosen according to
Lemma~\ref{lem:Z_l,F},
we obtain
\begin{align*}
d_n(\F_{(\mu_1,\nu_1)}|_{\SP_n},\F_{(\mu_2,\nu_2)}|_{\SP_n}) & \leq
 d_n(\F_{(\mu_1,\nu_1)}|_{\SP_n},A^N({\mathcal H_0},(\mu_1,\nu_1))|_{\SP_n}) \\
& \qquad + d_n(A^N({\mathcal H_0},(\mu_1,\nu_1))|_{\SP_n},A^N({\mathcal H_0},(\mu_2,\nu_2))|_{\SP_n})
 \\& \qquad +
d_n(A^N({\mathcal H_0},(\mu_2,\nu_2))|_{\SP_n},\F_{(\mu_2,\nu_2)}|_{\SP_n}) \\
& \leq n N d_{M}((\mu_1,\nu_1),(\mu_2,\nu_2)) \\& \qquad +
\sup_{(x,y)\in \SP_n} \Big[
\bE\big(|Z^{(\mu_1,\nu_1)}_\infty(x,y)-Z^{(\mu_1,\nu_1)}_N(x,y)|\big)
\\& \qquad \qquad +
\bE\big(|Z^{(\mu_2,\nu_2)}_N(x,y)-Z^{(\mu_2,\nu_2)}_\infty(x,y)|\big) \Big]
\\& \leq
n N d_{M}((\mu_1,\nu_1),(\mu_2,\nu_2)) + 2\epsilon.
\end{align*}
This shows that the map  $(\mu,\nu)\mapsto \F_{(\mu,\nu)}|_{\SP_n}$,
from $(\calPP,d_M)$ to $(C(\SP_n,\calP([0,1])),d_n),$ is continuous for every $n.$\\
Hence
the map $(\mu,\nu)\mapsto \F_{(\mu,\nu)}$
from $(\calPP,d_M)$ to $(C(\SP,\calP([0,1])),d)$ is continuous; since $\calPP$ is compact, it is also uniformly continuous.
\end{proof}

%
%
\section{The solution to the functional equation problem} \label{sect:fixed}

 In this section we prove Theorem \ref{theo:FEgeneral} and Theorem \ref{theo:contparameters}. In particular we show existence and uniqueness of the solution to Problem \eqref{eq:system1:F} when the boundary datum  is a generic element of $C([0,1],\calP([0,1])).$ Existence is shown by means of a constructive proof based on the canonical solution described in Section \ref{sect:RRU}. Uniqueness is proved through a fixed point argument.

Given $\varphi\in C([0,1],\calP([0,1])),$ define the map $\Gamma_\varphi:\calP([0,1])
\to\calP([0,1])$ by setting,
for every $\xi \in \calP([0,1]),$
\[
 \Gamma_\varphi(\xi) (B) = \int_0^1 \varphi(t)(B) \xi(dt),
\]
where \(B\) ranges over the Borel sets in \([0,1].\)

\begin{lemma}\label{lem:cont_Gamma}
For any given $\varphi\in C([0,1],\calP([0,1])),$ the map $\Gamma_\varphi$
is
uniformly continuous.
\end{lemma}

\begin{proof}
Since $\varphi \in C([0,1],\calP([0,1]))$, \(\varphi\) is uniformly continuous and bounded:
i.e. for any $\epsilon>0$, there is an $\eta=\eta(\epsilon,\varphi)$ such that
\begin{equation}\label{eq:cont_par_proof1}
d_W(\varphi(t_1),\varphi(t_2))\leq\epsilon, \qquad \text{if }|t_1-t_2|\leq \eta,
\end{equation}
while
\begin{equation}\label{eq:cont_par_proof2}
d_W(\varphi(t_1),\varphi(t_2))\leq 1, \qquad \text{for all }t_1,t_2 \in [0,1].
\end{equation}
Now, take $\xi_1,\xi_2 \in \calP([0,1])$ such that
$d_W(\xi_1,\xi_2)< \epsilon\eta$. We are going to prove that
$d_W(\Gamma_\varphi(\xi_1),\Gamma_\varphi(\xi_2))\leq 2\epsilon$.

Because of 
\eqref{eq:KR}, there is a probability space
$(\widetilde{\Omega},\widetilde{\mathcal{A}},\widetilde{\PP})$ carrying a couple
of random variables
\(X_1,X_2\) such that $X_1\sim \xi_1$, $X_2\sim \xi_2$, and
\(
d_W(\xi_1,\xi_2)=\bE (|X_1-X_2|) \leq \epsilon\eta;
\)
by Markov inequality,
\begin{equation}\label{eq:cont_par_proof3}
\widetilde{\PP} (|X_1-X_2|>\eta) \leq \epsilon.
\end{equation}
On the product probability space $(\widetilde{\Omega}\times [0,1],\widetilde{\mathcal{A}}
\otimes{\mathcal{B}}([0,1]),\widetilde{\PP}\otimes \lambda_{[0,1]})$ define the random variables
\begin{equation*}
\zeta_1(\omega,t)= \inf\Big\{z\colon \int_{[0,z]} \varphi(\eta_1(\omega))(ds) \ge t \Big\}=q_{\varphi(\eta_1(\omega))}(t)
\end{equation*}
and
\begin{equation*}
\zeta_2(\omega,t)= \inf\Big\{z\colon \int_{[0,z]} \varphi(\eta_2(\omega))(ds) \ge t \Big\}=q_{\varphi(\eta_2(\omega))}(t),
\end{equation*}
where \(q_\xi\) indicates the quantile function relative to the probability distribution \(\xi \in \calP([0,1]).\)

For \(i=1,2,\) note that \(\varphi(X_i)\) is the conditional distribution of \(\zeta_i\), given \(X_i;\)
thus, $\zeta_i\sim\Gamma_\varphi(\xi_i).$
Moreover, for all $\omega \in \widetilde{\Omega}$,
\begin{equation*}
\begin{aligned}
d_W(\varphi(X_1(\omega)),\varphi(X_2(\omega)))
&=
\int_0^1 | q_{\varphi(X_1(\omega))}(t)-q_{\varphi(X_2(\omega))}(t)| dt
\\
&= \int_0^1 |\zeta_1(\omega,t)-\zeta_2(\omega,t)|dt.
\end{aligned}
\end{equation*}
Hence,
\begin{multline}\label{eq:cont_par_proof5}
d_W(\Gamma_\varphi(\xi_1),\Gamma_\varphi(\xi_2)) 
\leq \bE(|\zeta_1-\zeta_2|)
\\ 
= \bE \big(\bE\big(|\zeta_1-\zeta_2|\big|X_1,X_2\big) \big)
= \bE \big(d_W(\varphi(X_1),\varphi(X_2))\big).
\end{multline}
Now, let $F=\{|X_1-X_2|>\eta\}$. From \eqref{eq:cont_par_proof1},
\eqref{eq:cont_par_proof2} and
\eqref{eq:cont_par_proof3} one obtains:
\begin{align*}
\bE \big(d_W(\varphi(X_1),\varphi(X_2))\big)&= \bE \big(d_W(\varphi(X_1),\varphi(X_2));F\big)
+ \bE \big(d_W(\varphi(X_1),\varphi(X_2));F^c\big)\\
&\leq \widetilde{\PP}(F)
+ \epsilon\widetilde{\PP}(F^c) \leq 2 \epsilon.
\end{align*}
The last inequality, together with \eqref{eq:cont_par_proof5}, concludes the proof.
\end{proof}

\begin{proof}[Proof of Theorem \ref{theo:FEgeneral}.]

 (i) {\it Existence.} When the boundary datum \(\varphi=\delta,\) the existence of a solution $\G^{\delta}_{(\mu,\nu)}$ is guaranteed by Proposition~\ref{prop:soluz RRU}, and this is ${\mathcal F}_{(\mu,\nu)}$.

Now let $\varphi \in C([0,1],\mathcal{P}([0,1]))$ and define, for all \((x,y) \in \SP,\)
 \begin{equation*}
\G^{\varphi}_{(\mu,\nu)}(x,y) = \Gamma_\varphi(\G^{\delta}_{(\mu,\nu)}(x,y));
\end{equation*}
we are going to show that \(\G^{\varphi}_{(\mu,\nu)}\) is indeed a solution to Problem \eqref{eq:system1:F} when the bondary datum is \(\varphi.\)
In other words, the composition
\[
 (x,y) \mapsto \G^{\delta}_{(\mu,\nu)}(x,y) \mapsto
\Gamma_\varphi(\G^{\delta}_{(\mu,\nu)}(x,y))
\]
gives a solution to Problem~\eqref{eq:system1:F}, i.e.,
\eqref{eq:soluz_gen} holds if the map $\Psi_\varphi$ is defined by setting, for all \(\G \in C(\SP,\calP([0,1])),\)
 \begin{equation*}
\Psi_\varphi(\G)(x,y) = \Gamma_\varphi(\G(x,y))
=\int_0^1 \varphi(t) \,\G(x,y)(dt)
\end{equation*}
with \((x,y)\) ranging over \(\SP.\)

Because of Proposition~\ref{prop:soluz RRU}, $\G^{\delta}_{(\mu,\nu)}$ satisfies
\eqref{eq:system1:func_eq} when the border datum is \(\delta.\) Since
$\Gamma_\varphi$ is linear, this implies that, for all \((x,y) \in \SP,\)
\begin{multline*}
 \Psi_\varphi(\G^{\delta}_{(\mu,\nu)})(x,y) =
\Gamma_\varphi(\G^{\delta}_{(\mu,\nu)}(x,y))
\\
\begin{aligned}
& = \Gamma_\varphi\Big(
\frac{x}{x+y}\int \G^{\delta}_{(\mu,\nu)}(x+k,y)\mu(dk) +
\frac{y}{x+y}\int \G^{\delta}_{(\mu,\nu)}(x,y+k)\nu(dk)
\Big)
\\
& =
\frac{x}{x+y}\int \Psi_\varphi(\G^{\delta}_{(\mu,\nu)})(x+k,y)
\mu(dk) +
\frac{y}{x+y}\int \Psi_\varphi(\G^{\delta}_{(\mu,\nu)})(x,y+k)\nu(dk);
\end{aligned}
\end{multline*}
hence $\Psi_\varphi(\G^{\delta}_{(\mu,\nu)})$
satisfies \eqref{eq:system1:func_eq} when the border datum is \(\varphi.\)
Now, by Lemma~\ref{lem:cont_Gamma},
$\Psi_\varphi(\G^{\delta}_{(\mu,\nu)})$
is a continuous map from $\SP$ to $\calP([0,1])$, being the
composition of the continuous maps $\Gamma_\varphi$ and $\G^{\delta}_{(\mu,\nu)}$; hence \eqref{eq:system1:contin},
\eqref{eq:system1:ax1}, \eqref{eq:system1:ax2}
and \eqref{eq:system1:ax_inf} also hold true.

\medskip

(ii) {\it Uniqueness. Sketch of the argument.} Our argument in \cite[Section~5]{AlettiMaySecchi07} can be easily
extended to this more general situation.

Condition~\eqref{eq:system1:ax_inf} requires $\G$ to be
continuous at the projective infinite points. It is therefore
convenient to transform the space $\SP$ along the
projective automorphism $\tau$ of $\PP^2$ so defined:
\[
(x:y:u) \mathop{\mapsto}\limits^\tau (u:x:x+y).
\]
The automorphism \(\tau\) has the following properties:
\begin{itemize}
\item [-] the space $\SP$ is mapped into the affine space \(\SP^*=[0,\infty)\times [0,1]\);
  \item [-] the positive $x$--axis is mapped into itself by $(x,0)\to (1/x,0)$;
  \item [-] the positive $y$--axis is mapped into the semiline $\{y^*=1, x^*>0\}$ by $(0,y)\mapsto (1/y,1)$;
  \item [-] the projective infinite point relative to the direction $\frac{x}{x+y} = k$ is mapped
  in the point $(0, k)$;
  \item [-] the origin is mapped in the projective infinite point $(1:0:0).$
\end{itemize}
The inverse map of \(\tau\) is $(x^*:y^*:u^*) \mathop{\mapsto}\limits^{\tau^{-1}} (y^*:u^*-y^*:x^*).$
Problem~\eqref{eq:system1:F} can be equivalently formulated on \(\SP^*\) as follows:\\
{\em given} \[ (\mu,\nu) \in \calPP \;\mbox{and the boundary datum}\; \varphi \in C([0,1],\calP([0,1])),\]
{\em find}
\begin{subequations}\label{eq:system1:F*}
\begin{equation}
\label{eq:system1:contin*}
\G^* \in C(\SP^*,\calP ([0,1]))
\end{equation}
such that, for all \((x,y) \in \SP^*\),
\begin{align}\label{eq:system1:func_eq*}
&
\begin{aligned}
\G^*(x^*,y^*) &=
y^* \int \G^*\Big(\frac{x^*}{1+kx^*},\frac {y^* + kx^*}{1+
kx^*}\Big) \mu(dk)+\\
&+(1-y^*) \int \G^*\Big(\frac{x^*}{1+kx^*},
\frac {y^*}{1+ kx^*}\Big) \nu(dk),
\end{aligned}
\\
\label{eq:system1:ax1*}
&\G^*(x^*,0) = \varphi(0),
\\
\label{eq:system1:ax2*}
&\G^*(x^*,1) = \varphi(1),
\\
\label{eq:system1:ax_inf*}
&\G^*(0,y^*) = \varphi(y^*).
\end{align}
\end{subequations}
In fact, \eqref{eq:system1:func_eq*} is just \eqref{eq:system1:func_eq}
in the new coordinates. Indeed, the transformations
\begin{align*}
 \G(x,y)&= \G^*(\tau(x,y))
\\
 \G^*(x^*,y^*)&=
\begin{cases}
 \G(\tau^{-1}(x^*,y^*)) & \text{if }(x^*,y^*)\in(0,\infty)\times
  [0,1];
\\
\lim\limits_{s^*\to 0} \G(\tau^{-1}(s^*,y^*)) & \text{if }x^*=0,y^*\in[0,1],
\end{cases}
\end{align*}
show the equivalence of Problem~\eqref{eq:system1:F} and Problem~\eqref{eq:system1:F*}.


Now, let \(\C_\varphi^*(\SP^*)\) be the space of continuous function \(
  {{\mathcal H}^*} :\SP^*\rightarrow {\calP}([0,1])\) such that, for every \((x^*,y^*)\in \SP^*,\)
\[
{\mathcal H}^*(x^*,0)=\varphi(0),\; {{\mathcal H}^*}(x^*,1)=\varphi(1)\;\mbox{and}\;
{\mathcal H}^*(0,y^*)=\varphi({y^*}).
\]
Define the following operator \(A^*\) mapping \(\C_\varphi^*(\SP^*)\)
into \(\C_\varphi^*(\SP^*)\):
\[
 \begin{aligned}
A^* ({\mathcal H}^*)(x^*,y^*) &=
y^* \int {\mathcal H}^*\Big(\frac{x^*}{1+kx^*},\frac {y^* + kx^*}{1+
kx^*}\Big) \mu(dk)+\\
&+(1-y^*) \int {\mathcal H}^*\Big(\frac{x^*}{1+kx^*},
\frac {y^*}{1+ kx^*}\Big) \nu(dk)
\end{aligned}
\]
with \((x^*,y^*)\) ranging over \(\SP^*.\)

With the same argument used in \cite[Theorem~5.2]{AlettiMaySecchi07},
one can prove that $A^*$ has at most one fixed point; hence Problem~\eqref{eq:system1:F*} has at most
one solution.

\medskip

(iii) {\it Continuity with respect to the boundary datum.} We prove this last part by
showing that
\begin{equation}\label{eq:embedding}
d(\G_{(\mu,\nu)}^{\varphi_1},\G_{(\mu,\nu)}^{\varphi_2})
\leq
d_\infty(\varphi_1,\varphi_2)
\end{equation}
for all $\varphi_1,\varphi_2\in C([0,1], \calP([0,1]))$.
We recall here that (see, e.g., \cite{Gibbs.et.al.02})
\begin{equation}\label{eq:defiWasserEquiv}
d_W(\eta_1,\eta_2) = \sup\Big\{\Big|\int h(t)\eta_1(dt)-
\int h(t) \eta_2(dt)\Big|:
\|h\|_L\leq 1\Big\}
\end{equation}
where $\|h\|_L$ is the Lipschitz norm.
Then, for \(h\) such that $\|h\|_L\leq 1$ and $(x,y)\in\SP$, we get
\begin{multline*}
\Big|\int h(s)\G^{\varphi_1}_{(\mu,\nu)}(x,y)(ds)-
\int h(s)\G^{\varphi_2}_{(\mu,\nu)}(x,y)(ds)\Big|\\
\begin{aligned}
& = \Big|\int h(s)\int\varphi_1(t)(ds) \G^{\delta}_{(\mu,\nu)}(x,y)(dt)
\\
& \qquad \qquad -
\int h(s)\int\varphi_2(t)(ds) \G^{\delta}_{(\mu,\nu)}(x,y)(dt)\Big|\\
& \leq \int \Big| \int h(s) \varphi_1(t)(ds) -\int h(s)\varphi_2(t)(ds) \Big|
\G^{\delta}_{(\mu,\nu)}(x,y)(dt)
\\
& \leq \int d_W(\varphi_1(t),\varphi_2(t))
\G^{\delta}_{(\mu,\nu)}(x,y)(dt) \leq d_\infty(\varphi_1,\varphi_2);
\end{aligned}
\end{multline*}
hence $ d_W(\G^{\varphi_1}_{(\mu,\nu)}(x,y),
\G^{\varphi_2}_{(\mu,\nu)}(x,y))\leq
d_\infty(\varphi_1,\varphi_2)$, again by \eqref{eq:defiWasserEquiv}.
Inequality \eqref{eq:embedding} follows easily.
\end{proof}

\begin{remark}
Given $(\mu,\nu)\in\calPP,$ the inequality \eqref{eq:embedding} can be completed as follows:
for all \(\varphi_1,\varphi_2 \in C([0,1], \calP([0,1]),\)
\begin{equation}\label{eq:embeddingcomplete}
d(\G_{(\mu,\nu)}^{\varphi_1},\G_{(\mu,\nu)}^{\varphi_2})
\leq
d_\infty(\varphi_1,\varphi_2)
\leq
2d(\G_{(\mu,\nu)}^{\varphi_1},\G_{(\mu,\nu)}^{\varphi_2}).
\end{equation}
Hence, for any given $(\mu,\nu)\in\calPP,$ we have an embedding
$$C([0,1], \calP([0,1]))
\mathop{\hookrightarrow}\limits^{\Psi_\varphi} C(\SP, \calP([0,1])).$$
Indeed, for \(n=1,2,...,\) \eqref{eq:system1:ax_inf}
implies that
\[
 d_\infty(\varphi_1,\varphi_2) \leq d_n(\G_{(\mu,\nu)}^{\varphi_1},\G_{(\mu,\nu)}^{\varphi_2});
\]
since $d_n\leq 1$, and thus $d_n\leq 2\frac{d_n}{1+d_n}$, this implies the right inequality in \eqref{eq:embeddingcomplete}.
\end{remark}

\begin{remark}
Let $m$ be the common mean of $(\mu,\nu)\in\calPP$.
For $p\in [m_0/m,1]$,
set
$(\mu^\prime,\nu^\prime)=(p\mu+(1-p)\delta_0,
p\nu+(1-p)\delta_0).$
Then \((\mu^\prime,\nu^\prime) \in \calPP\) and
\(
\G^\varphi_{(\mu^\prime,\nu^\prime)}=\G^\varphi_{(\mu,\nu)}.
\)
\end{remark}

\begin{remark}\label{rem:cont_maps}
Let $h:[0,1]\to[0,1]$ be a continuous function and \(\varphi \in C([0,1],\calP([0,1]))\) a boundary datum.
For \(\xi \in \calP([0,1]),\) denote with \(h \circ \xi\) the distribution of the random variable
\(h(W),\) where \(W\) is a random variable with probability distribution \(\xi.\)
Then
\(
\G^{h \circ \varphi}_{(\mu,\nu)}(x,y)=h\circ \G^{\varphi}_{(\mu,\nu)}(x,y),
\)
for all \((x,y) \in \SP.\)
\end{remark}

\begin{remark}
One may notice that the boundary conditions \eqref{eq:system1:ax1} and \eqref{eq:system1:ax2} are redundant. Indeed,
if \eqref{eq:system1:func_eq} and \eqref{eq:system1:ax_inf} are true for a \(\G: \SP\rightarrow \mathcal{P}([0,1]),\) then \(\G\) satisfies \eqref{eq:system1:ax1} and \eqref{eq:system1:ax2}. To see this, let \(x>0\) and consider \(\G(x,0).\) By iteratively applying \eqref{eq:system1:func_eq}, one obtains
\[
\begin{aligned}
\G(x,0)&=\int_0^\beta \G(x+k_1,0) \mu(dk_1)\\
&=\int_0^\beta\int_0^\beta \G(x+k_1+k_2,0) \mu(dk_1)\mu(dk_2)\\
& \cdots\\
&=\int_0^\beta \cdots\int_0^\beta \G(x+k_1+\cdots+k_n,0)\mu(dk_1)\cdots\mu(dk_n)
\end{aligned}
\]
for all \(n=1,2,...\).
However, because of \eqref{eq:system1:ax_inf}, if $\sum_{i=1}^n k_i \rightarrow \infty$ as $n \rightarrow \infty$, then
\[
\lim_{n \rightarrow \infty} d_W(\G(x+\sum_{i=1}^n k_i,0),\varphi(1))=0.
\]
Hence, the Law of Large Numbers and the Dominated Convergence Theorem imply that
\(\G(x,0) = \varphi(1).\)
The argument for proving that \(\G(0,y)=\varphi(0),\) if \(y>0,\) is analogous.
\end{remark}

\begin{remark}
For \(n=1,2,...\) let \(\gamma_n(x,y)\) be the distribution of \((X_n,Y_n),\) the number of black and white balls respectively present in a \emph{RRU}, with initial composition \((x,y) \in \SP\) and reinforcement distribution \(\mu\) and \(\nu,\) after is has been sampled for the \(n\)-th time.

Since \(\{(X_n,Y_n)\}\) is Markov, if \(\G\) is a solution to Problem~\eqref{eq:system1:F}, it is easy to prove, by induction on \(n,\) that
\[
\G(X,Y) \wedge \gamma_n(x,y)= \G(x,y)
\]
for all \(n\) and \((x,y) \in \SP.\)
However one can show that \eqref{eq:system1:ax_inf} implies that
\[
d_W (\G(X, Y)\wedge \gamma_n(x,y), \varphi\Big(\frac{X}{X + Y}\Big)\wedge \gamma_n(x,y) )
\mathop{\longrightarrow}_{n\to\infty} 0.
\]
Hence
\[
\varphi\Big(\frac{X}{X + Y}\Big)\wedge \gamma_n(x,y)
\mathop{\longrightarrow}^{d_W}_{n\to\infty}
\G(x, y)
\]
and this shows the uniqueness of the solution \(\G.\)
This interesting remark has been pointed out by a referee.
\end{remark}

We are finally in the position to prove Theorem \ref{theo:contparameters}.

\begin{proof}[Proof of Theorem \ref{theo:contparameters}.]
The theorem is true when the boundary datum is $\delta$, as follows from
Proposition~\ref{prop:PtoG} and the fact that $\calPP$ is compact.
For a general boundary datum \(\varphi \in C([0,1],\calP([0,1]))\) the result follows once it is proved that the map \(\Psi_\varphi\) is continuous; but this is a consequence of
Lemma~\ref{lem:cont_Gamma}.
\end{proof}

\section{Diffuse solutions}\label{sect:diffuse}


As an immediate consequence of Proposition \ref{prop:soluz RRU} and \cite[Theorem~3.2]{AlettiMaySecchi08} we have the following result, which is a particular instance of Theorem~\ref{theo:nomassgeneral} and represents a tool for proving it.

\begin{proposition}\label{prop:nomassG}
For all $(x,y)$ in the interior of \(\SP\) and $z\in[0,1]$,
$$\G^{\delta}_{(\mu,\nu)}(x,y)(\{z\})=0.$$
\end{proposition}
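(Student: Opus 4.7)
The plan is to identify the canonical solution with the law of the RRU limit proportion and then invoke a known diffuseness result for that limit; the statement then drops out as a dictionary translation.

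First I would combine Proposition~\ref{prop:soluz RRU}, which asserts that $\F_{(\mu,\nu)}$ solves Problem~\eqref{eq:system1:F} with boundary datum $\delta$, with the uniqueness part of Theorem~\ref{theo:FEgeneral} already established above. Together these force the identification
\[
\G^{\delta}_{(\mu,\nu)} \;=\; \F_{(\mu,\nu)}.
\]
Consequently, for every $(x,y)\in\SP$, the measure $\G^{\delta}_{(\mu,\nu)}(x,y)$ is precisely the distribution of the almost sure limit proportion $Z_\infty(x,y)$ of a randomly reinforced urn started from initial composition $(x,y)$ with reinforcement laws $\mu$ and $\nu$.

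Next I would appeal to \cite[Theorem~3.2]{AlettiMaySecchi08}. Its hypotheses are met here because $(\mu,\nu)\in\calPP$ guarantees that $\mu$ and $\nu$ share a common mean, which is at least $m_0>0$. That theorem then states that, as soon as the initial composition satisfies $x>0$ and $y>0$, that is $(x,y)$ lies in the interior of $\SP$, the distribution of $Z_\infty(x,y)$ has no atoms on $[0,1]$. Transporting this back through the identification of the previous step yields $\G^{\delta}_{(\mu,\nu)}(x,y)(\{z\}) = 0$ for every $z\in[0,1]$, which is exactly the claim.

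There is no genuine technical obstacle in this argument beyond the two ingredients already in place. The only point deserving attention is making sure that \cite[Theorem~3.2]{AlettiMaySecchi08} is stated with enough generality to cover the full class $\calPP$, that is, arbitrary pairs $(\mu,\nu)$ of equal-mean distributions on $[0,\beta]$, and not merely the diagonal case $\mu=\nu$ treated in \cite{AlettiMaySecchi07}; once this is verified, the conclusion is immediate.
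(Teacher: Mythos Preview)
Your proposal is correct and matches the paper's own argument: the paper states Proposition~\ref{prop:nomassG} as an immediate consequence of Proposition~\ref{prop:soluz RRU} together with \cite[Theorem~3.2]{AlettiMaySecchi08}, exactly the two ingredients you invoke. The only cosmetic difference is that the paper takes the identification $\G^{\delta}_{(\mu,\nu)}=\F_{(\mu,\nu)}$ as definitional (see the existence part of the proof of Theorem~\ref{theo:FEgeneral}), so the appeal to uniqueness is not strictly needed, but it is harmless.
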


\begin{proof}[Proof of Theorem~\ref{theo:nomassgeneral}]
Given \(z\in[0,1],\)
note that $\varphi_z(t)=\varphi(t)(\{z\})$ is a measurable function of
$t$, since it is the
monotone limit of the sequence of continuous functions $k_z^{(n)}$ defined by setting, for \(n=1,2,...\) and \(t \in [0,1],\)
\[
 k_z^{(n)}(t)= \int h_z^{(n)}(s)\varphi(t)(ds),
\]
where
\[
h_z^{(n)}(s)=
\begin{cases}
ns-nz+1, & \text{if }z-1/n\leq s\leq z,
\\
nz-ns+1, & \text{if }z<s\leq z+1/n,
\\
0, & \text{otherwise}.
\end{cases}
\]
The two functions
\[
 \varphi_{z}^-(t) = \sup_{w<z} F_{\varphi(t)}(w),\qquad
 \varphi_{z}^+(t) = F_{\varphi(t)}(z)
\]
are monotonically nonincreasing in $t$, since $\varphi$ is monotone. Moreover,
\(
\varphi_z(t) =\varphi_{z}^+(t)-\varphi_{z}^-(t).
\)
Therefore $\varphi_z$ is a bounded variation function and it thus has
at most a countable number of points of discontinuity. Note that
\[
 \Phi(\{z\}) = \int_0^1 \phi_{z}(t) dt=\int_0^1 (\phi_{z}^+(t)-\phi_{z}^-(t) ) dt.
\]
\medskip

\noindent {\it Proof of part 1.}
Let \((x_0,y_0)\) be a point in the interior of \(\SP\) such that \(\G^\varphi_{(\mu,\nu)}(x_0,y_0)\)
has no point masses in \([0,1].\) By way of contradiction, suppose there is a \(z \in [0,1]\) such that
\(\Phi(\{z\})>0.\)

Since \(\Phi(\{z\})>0,\) there are $\epsilon>0, a>0$ and $z_*\in[0,1]$ such that
\begin{equation}\label{eq:nomass_gen:dim1}
\varphi_z(t)>\epsilon, \qquad\text{for all $t\in I_*=[z_*-a,z_*+a]\cap [0,1]$.}
\end{equation}
Set
\[
\begin{aligned}
R=\Big\{(x,y)\in\SP\colon & x\geq\max(2\beta,\frac{64 \beta}{a^2}),
\\
& y = \big(\frac{1}{\overline{z}}-1\big) x \quad \big(\overline{z}=\frac{x}{x+y}\big),
\\
& \overline{z} \in \big[z_*-\frac{a}{2},z_*+\frac{a}{2}\big]\cap [0,1]
\Big\}.
\end{aligned}
\]
For all $(x,y)\in R,$
Corollary~\ref{cor:scostamento} with $h_0=\frac{a}{4}$ implies that
\[
 \PP\Big(Z_\infty(x,y)\not\in I_*\Big)
 \leq
\PP\Big(|Z_\infty(x,y)- \overline{z}|>\frac{a}{2}\Big)
\leq \frac{4}{a}\sqrt{\frac{\beta}{x}}\leq\frac{1}{2}
\]
and thus $\G^{\delta}_{(\mu,\nu)}(x,y)(I_*)\geq \frac{1}{2}$.
Then, because of \eqref{eq:nomass_gen:dim1}, for all $(x,y)\in R$,
\[
\G^\varphi_{(\mu,\nu)}(x,y)(\{z\}) 
= \int_{0}^1 \varphi_z(t)
\G^{\delta}_{(\mu,\nu)}(x,y)(dt)
\geq \epsilon \int_{I_*} \G^{\delta}_{(\mu,\nu)}(x,y)(dt)
\geq \frac{\epsilon}{2}.
\]
Now, set
$$\tau=\inf\{n \geq 0 \colon(X_n(x_0,y_0),Y_n(x_0,y_0))\in R\}\qquad (\inf\varnothing=\infty);$$
it is not difficult to show that \(\PP(\tau<\infty)=p>0\).
Thus the strong Markov property implies that $\G^{\delta}_{(\mu,\nu)}(x_0,y_0)(I_*)\geq
\frac{p}{2};$ therefore
$\G^\varphi_{(\mu,\nu)}(x_0,y_0)(\{z\})\geq p\frac{\epsilon}{2}$ contradicting the assumption that $\G^\varphi_{(\mu,\nu)}(x_0,y_0)$ has no point masses in \([0,1].\) This concludes the proof of part {\it 1.}
\medskip

\noindent {\it Proof of part 2.}
Let now \(z \in [0,1];\) by assumption $\Phi(\{z\})=0$. Since
 $\varphi_z$
has at most a countable number of points of discontinuity,
  $\varphi_z\geq 0$
 and $\int \varphi_z(t)dt= 0,$
there is a sequence $t_1,t_2,...$ such that
$\varphi_z(t)= 0$ for all $t\in F=(\cup_i\{t_i\})^c$. Then, given any \((x,y)\) in the interior of \(\SP,\)
\[
\begin{aligned}
  \G^\varphi_{(\mu,\nu)}(x,y)(\{z\}) &=
  \int_{[0,1]} \varphi_z(t)\G^{\delta}_{(\mu,\nu)}(x,y)(dt)
\\ &= \int_{[0,1]\cap F} \varphi_z(t)\G^{\delta}_{(\mu,\nu)}(x,y)(dt) +
\sum_{t_i} \varphi_z(t_i)\G^{\delta}_{(\mu,\nu)}(x,y)(\{t_i\})
\\ &= 0
\end{aligned}
\]
the last term being zero because of Proposition \ref{prop:nomassG}.
\end{proof}

\section{Examples}\label{section:examples}

In this section we give explicit descriptions of the solution \(\G^\varphi_{(\mu,\nu)}\)
for some specific choices of the reinforcement distributions \((\mu,\nu) \in \calPP\) and of the boundary datum \(\varphi \in C([0,1],\calP([0,1])).\)
The first example is prototypical since it considers the P\'olya urn scheme and the family of Beta distributions,
whose properties had a central role in originating most of the problems tackled in this paper.

\subsection{The P\'olya urn scheme and the family of Beta distributions}
We indicate with $\text{Beta}(x,y)$ the beta distribution on \([0,1]\) with parameters $(x,y)\in\SP.$
If \((x,y)\) is a point in the interior of \(\SP,\) $\text{Beta}(x,y)$ has a density given by
\[
 f_{\text{Beta}(x,y)}(t)
= \frac{\Gamma(x+y)}{\Gamma(x)\Gamma(y)} t^{x-1}
(1-t)^{y-1},
\]
for \(t\in [0,1];\) it is convenient to indicate with
$\text{Beta}(0,y)$ and $\text{Beta}(x,0)$ the point mass at 0 or at 1, respectively.

The random limit composition of a P\'olya urn with initial composition \((x,y) \in \SP\) and constant reinforcement equal to 1
has distribution $\text{Beta}(x,y);$ indeed a P\'olya urn is a special \emph{RRU} with reinforcements \(\mu=\nu=\delta_1.\) The same result holds when \(\mu=\nu=\text{Bernoulli}(p),\) for $p>0;$ see \cite{DurhamFlournoyLi} and the references therein.
Hence, for \(p>0,\)
\[
\G^{\delta}_{(\delta_1,\delta_1)}(x,y)=\G^{\delta}_{(\text{Bernoulli}(p),\text{Bernoulli}(p))}(x,y)=\text{Beta}(x,y),
\]
for all \((x,y) \in \SP.\)

Different families of distributions related to the Beta can be generated through the P\'olya urn scheme, where both reinforcement distributions are equal to the same point mass, by modifying the boundary datum and solving Problem \eqref{eq:system1:F}.

For instance, let \(\lambda>0,\) define \(\varphi_\lambda:[0,1]\rightarrow \calP([0,1])\) by setting
\(\varphi_\lambda(t)=\delta(t^{1/\lambda}),\) for \(t \in [0,1],\)
and consider Problem \eqref{eq:system1:F} with \(\mu=\nu=\delta_1\) and boundary datum equal to \(\varphi_\lambda.\)
Note that \(\varphi_\lambda\) is monotone and thus the unique solution to the problem is diffuse. Indeed
for \((x,y)\) in the interior \(\SP,\)  the distribution $\G^{\varphi_\lambda}_{(\delta_1,\delta_1)}(x,y)$
has density
\[
f_{\G^{\varphi_\lambda}_{(\delta_1,\delta_1)}(x,y)}(t)= \lambda \frac{\Gamma(x+y)}{\Gamma(x)\Gamma(y)}t^{\lambda x-1}(1-t^\lambda)^{y-1},
\]
for \(t \in [0,1].\) For \(x=1\) and \(y>0,\)  the solution  $\G^{\varphi_\lambda}_{(\delta_1,\delta_1)}(1,y)$ is called
the Kumaraswamy distribution with shape parameters \(\lambda\) and \(y;\) see \cite{Kumaraswamy80}.

A captivating family of distributions is generated along these lines by setting the boundary datum \(\varphi\) to be the function mapping \(t\in[0,1]\) to the exponential distribution with parameter \(t\) truncated to \([0,1]:\) hence, for \(t \in (0,1],\) the density of $\varphi(t)$ is
\[
f_{\varphi(t)} (z) =
\frac{t \exp(-zt)}{1-\exp(-t)} 1_{[0,1]}(z),
\]
while $f_{\varphi(0)} (z) = 1_{[0,1]}(z)$.

Then (\ref{eq:soluz_gen}) and (\ref{eq:soluz_gen:2}) imply that, for \((x,y)\) in the interior \(\SP,\)  the distribution $\G^{\varphi}_{(\delta_1,\delta_1)}(x,y)$
has density
\[
f_{\G^{\varphi}_{(\delta_1,\delta_1)}(x,y)} (z) =
\int_0^1 \frac{t \exp(-zt)}{1-\exp(-t)} 
\frac{\Gamma(x+y)}{\Gamma(x)\Gamma(y)}
t^{x-1} (1-t)^{y-1} 
\,dt.
\]
This density admits an interesting representation in terms of the Hurwitz zeta function.

Indeed, since $(1-\exp(-t))^{-1}=\sum_{n\geq 0} (\exp(-t))^{n},$

\begin{align*}
f_{\G^{\varphi}_{(\delta_1,\delta_1)}(x,y)} (z) &=
\frac{\Gamma(x+y)}{\Gamma(x)\Gamma(y)}
\sum_{n= 0}^{\infty} \int_0^1 t e^{-(z+n)t} t^{x-1} (1-t)^{y-1} \,dt
\\
&=
\frac{\Gamma(x+y)}{\Gamma(x)\Gamma(y)}
\frac{\Gamma(x+1)\Gamma(y)}{\Gamma(x+y+1)}
\sum_{n\geq 0}
\frac{\Gamma(x+y+1)}{\Gamma(x+1)\Gamma(y)}
\int_0^1 e^{-(z+n)t} t^{x} (1-t)^{y-1} \,dt\\
&= \frac{x}{x+y}
\sum_{n\geq 0} M(x+1,x+y+1,-(z+n)) ,
\end{align*}
where $M(a,b,z)$ is the Kummer's (confluent hypergeometric) function.
When the real parts \(\Re(a)\) and \(\Re(b)\) of \(a\) and \(b\) are such that \(\Re(b)>\Re(a)>0,\) the function \(M\) can be represented as a Barnes integral:
\[
M(a,b,-z) = \frac{1}{2\pi i}\frac{\Gamma(b)}{\Gamma(a)}\int_{-i\infty}^{i\infty} \frac{\Gamma(-s)\Gamma(a+s)}{\Gamma(b+s)}z^sds
\]
where the contour of integration separates the poles of $\Gamma(a+s)$,
which are $\{-a,-a+1,-a+2,\ldots\}$, from those of $\Gamma(-s)$,
which are $\{0,1,2,\ldots\}$. Therefore this contour may be taken
in the halfplane $\Re(s)<-1$ whenever $\Re(a)>1$.
Moreover, for $z\in(0,1)$ and $\Re(s)<-1$,
\[
\sum_{n\geq 0} (z+n)^s = \zeta (-s,z)
\]
where $\zeta$ is the Hurwitz zeta function. Therefore
\begin{align*}
f_{\G^{\varphi}_{(\delta_1,\delta_1)}(x,y)} (z) &= \frac{x}{x+y}
\sum_{n\geq 0}
\frac{1}{2\pi i}\frac{\Gamma(x+y+1)}{\Gamma(x+1)}\int_{-i\infty}^{i\infty} \frac{\Gamma(-s)\Gamma(x+1+s)}{\Gamma(x+y+1+s)}(z+n)^s ds \\
&= \frac{x}{x+y}
\frac{1}{2\pi i}\frac{\Gamma(x+y+1)}{\Gamma(x+1)}\int_{-i\infty}^{i\infty} \frac{\Gamma(-s)\Gamma(x+1+s)}{\Gamma(x+y+1+s)}\zeta (-s,z) ds \\
&=
\frac{1}{2\pi i}\frac{\Gamma(x+y)}{\Gamma(x)}\int_{-i\infty}^{i\infty} \frac{\Gamma(-s)\Gamma(x+1+s)}{\Gamma(x+y+1+s)}\zeta (-s,z) ds
\end{align*}
where the contour of integration may be chosen in the halfplane $\Re(s)<-1$, since $\Re(x+1)>1$.

\subsection{Bernoulli reinforcements}
A more intriguing extension which goes well
beyond the P\'olya urn scheme
is obtained by considering reinforcement distributions \((\mu,\nu) \in \calPP\) different from equal point masses; we here treat the case where \(\mu\) and \(\nu\) are scaled Bernoulli distributions with the same mean.
Let
$m\geq k_\mu \geq k_\nu >0,$
and assume that \(\mu\) and \(\nu\) are the distributions of two random variables, say \(R_X\) and \(R_Y,\)
such that \(R_X/k_\mu\) has distribution Bernoulli($m/k_\mu$) while \(R_Y/k_\nu\) has distribution
Bernoulli($m/k_\nu$).

Equation~\eqref{eq:FEgeneralTOTcompact}, with $(\mu,\nu)$ as above, reads
\begin{equation*}
\frac{x}{k_\mu}\Big(\G(x,y)-\G(x+k_\mu,y\big)\Big) +
\frac{y}{k_\nu}\Big(\G(x,y)-\G(x,y+k_\nu\big)\Big)=0,
\end{equation*}
which does not depend on \(m.\)
One easily verifies that the equation is satisfied by the continuous map \(\G:\SP \rightarrow \calP([0,1])\) defined by setting, for all \((x,y) \in \SP,\)
\[
\G(x,y) = \text{Beta}(\frac{x}{k_\mu},\frac{y}{k_\nu}).
\]
%
Moreover, note that,
\[
d_W\big(\text{Beta}(\frac{x}{k_\mu},\frac{y}{k_\nu}),\delta(\frac{{x}{k_\nu}}{{x}{k_\nu} + {y}{k_\mu}})\big)\mathop{\longrightarrow}\limits_{x+y\to\infty} 0.
\]
Hence,
if \(h:[0,1] \rightarrow [0,1]\) is defined by setting
\[
h(t)=\frac{t k_\nu}{t k_\nu + (1-t) k_\mu}
\]
for all \(t \in [0,1],\) then
\[
\G^{h \circ \delta}_{(\mu,\nu)}(x,y)=\text{Beta}(\frac{x}{k_\mu},\frac{y}{k_\nu}),
\]
for \((x,y) \in \SP,\)
is the unique solution to Problem \eqref{eq:system1:F} when \(\mu\) and \(\nu\) are the scaled Bernoulli
distributions defined above and the boundary datum is the continuous map \(h \circ \delta:[0,1] \rightarrow \calP([0,1])\) defined by setting
$$ h \circ \delta(t)= \delta(\frac{t k_\nu}{t k_\nu + (1-t) k_\mu})$$
for all \(t \in [0,1].\)

We now want to find the distribution of the limit composition of a \emph{RRU} whose reinforcements are distributed according to
the scaled Bernoulli distributions $\mu$ and $\nu.$
Note that \(h\) is continuous, monotonically increasing and its inverse is
\[
h^{-1}(u)=\frac{u k_\mu}{u k_\mu + (1-u) k_\nu}
\]
for \(u \in [0,1].\)
Then it follows from Remark \ref{rem:cont_maps} that,
\[
\G^{\delta}_{(\mu,\nu)}(x,y)=\G^{h^{-1}\circ h \circ \delta}_{(\mu,\nu)}(x,y)=h^{-1}\circ  \text{Beta}(\frac{x}{k_\mu},\frac{y}{k_\nu}),
\]
for all \((x,y) \in \SP.\)
For \((x,y)\) in the interior of \(\SP,\) \(\G^\delta_{(\mu,\nu)}(x,y)\) has a density and this is
\begin{equation*}
f_{\G^\delta_{(\mu,\nu)}(x,y)}(t) =
k_\mu^{\frac{y}{k_\nu}} k_\nu^{\frac{x}{k_\mu}}
\frac{\Gamma( x/k_\mu + y/k_\nu)}{
\Gamma( x/k_\mu)\Gamma( y/k_\nu)}
\frac{t^{ \frac{x}{k_\mu}-1}(1-t)^{\frac{y}{k_\nu}-1}
}{[t k_\nu + (1-t) k_\mu]^{ \frac{x}{k_\mu}+ \frac{y}{k_\nu}}}
\end{equation*}
for \(t \in [0,1].\)
Apart from the Polya urn scheme, to the best of our knowledge this is the first example of an \emph{RRU} where
the analytical expression of the density of the urn limit composition is known; notably it has been found by solving Problem~\eqref{eq:system1:F}.

\appendix
\section{Doob decomposition of the \emph{RRU} process}
\numberwithin{equation}{section}
This appendix provides a series of auxiliary results necessary to prove Propositions \ref{prop:soluz RRU} and \ref{prop:PtoG}. We will refer to the notations introduced in Section \ref{sect:RRU}.
For \(n=1,2,...\) let $\mathcal A_n=\sigma(\delta_1,R_X(1),R_Y(1),\ldots,\delta_n,R_X(n),R_Y(n) )$ and consider the filtration \(\{\mathcal A_n\};\)
then, given the initial urn composition \((x,y) \in \SP,\) the Doob's semi-martingale decomposition
of $Z_n(x,y)$ is
\[
Z_n(x,y) = Z_0(x,y) + M_n(x,y) + A_n(x,y)
\]
where $\{M_n\}$ is a zero mean martingale and the previsible process $\{A_n\}$ is eventually increasing (decreasing),
again by \cite[Theorem~2]{Muliere.et.al.06}. We also denote by $\{\langle{M}\rangle_n\}$ the bracket process associated to
$\{M_n\},$ i.e. the previsible process
obtained by the Doob's decomposition of $M^2_n$.

We first provide some auxiliary inequalities.
As a consequence of \cite[Lemma~4.1]{AlettiMaySecchi08}, we can bound the increments
$\Delta A_n$ of the $Z_n$-compensator process
and the increments
$\Delta \langle M\rangle_n$ of the bracket process associated to $\{M_n\}$.
In fact, an easy computation
gives
\begin{equation*}
\Delta {A}_{n+1}
=
\bE(\Delta Z_{n+1}|{{\mathcal A}_n}) = Z_{n}(1-Z_n)
A^*_{n+1}
\end{equation*}
and
\begin{equation*}
\bE((\Delta Z_{n+1})^2|{{\mathcal A}_n})
 =
Z_{n}(1-Z_n) {Z^*_{n+1}}.
\end{equation*}
where
\begin{displaymath}
A^*_{n+1} =\bE\Big( \frac{\frac{R_X(n+1)}{D_n}}{1+\frac{R_X(n+1)}{D_n}} -
\frac{\frac{R_Y(n+1)}{D_n}}{1+\frac{R_Y(n+1)}{D_n}} \Big|{{\mathcal A}_n}
\Big),
\end{displaymath}
and
\begin{displaymath}
Z^*_{n+1} =\bE\Big( (1-Z_n) \Big(\frac{\frac{R_X(n+1)}{D_n}}{1+\frac{R_X(n+1)}{D_n}} \Big)^2
+ Z_n \Big(\frac{\frac{R_Y(n+1)}{D_n}}{1+\frac{R_Y(n+1)}{D_n}} \Big)^2\Big|{{\mathcal A}_n}
\Big).
\end{displaymath}
Now, \cite[Lemma~4.2]{AlettiMaySecchi08} with $m=\int_0^\beta k\mu(dk)=\int_0^\beta k\nu(dk)$ gives
\begin{equation}\label{eq:A^*_n}
|{A}^*_{n+1}|  \leq \frac{m}{m+D_n}
-\frac{m}{\beta+D_n}.
\end{equation}
By applying \cite[Lemma~4.1]{AlettiMaySecchi08} with
$h(x,t) = (\frac{x}{x+t})^2$,
$B_D=[2\beta,\infty)$,
$D=D_n$, $R=R_{X}(n+1)$ or $R=R_{Y}(n+1)$ and ${\mathcal A}={\mathcal A}_n$,
one obtains:
\begin{equation}\label{eq:DeltaZ_n^2}
Z_{n}(1-Z_n) \Big(\frac{m}{m+D_n}\Big)^2 \leq
\bE((\Delta Z_{n+1})^2|{{\mathcal A}_n})
\leq
Z_{n}(1-Z_n) \frac{m\beta}{(\beta+D_n)^2},
\end{equation}
on the set $\{D_n\geq 2\beta\}$. Since
\[
\bE((\Delta Z_{n+1})^2|{{\mathcal A}_n}) =
\bE((\Delta {A}_{n+1}+\Delta {M}_{n+1})^2|{{\mathcal A}_n}) =
(\Delta {A}_{n+1})^2+
\Delta \langle{M}\rangle_{n+1},
\]
if $D_0\geq 2\beta,$ and thus $\beta +D_n\geq 3\beta$,
\eqref{eq:A^*_n} together with
\eqref{eq:DeltaZ_n^2} yields
\begin{equation}\label{eq:diseqDeltaBraket}
\begin{aligned}
\Delta \langle{M}\rangle_{n+1}&
\geq
Z_{n}(1-Z_n) \Big(\frac{m}{m+D_n}\Big)^2 \Big(
1- \Big(\frac{\beta -m}{\beta+D_n}\Big)^2 \Big)
\\
&\geq \frac{8}{9} Z_{n}(1-Z_n) \Big(\frac{m}{m+D_n}\Big)^2,
\\[2mm]
\Delta \langle{M}\rangle_{n+1}&
\leq Z_{n}(1-Z_n) \frac{m\beta}{(\beta+D_n)^2}.
\end{aligned}
\end{equation}
\begin{lemma}
For all $k=1,2,...$,
\begin{equation}\label{eq:1suDN}
\bE(\frac{1}{D_{k}})
\leq
\frac{1+(\beta-m)/{D_0}}{D_0+m(k-1)+\beta}.
\end{equation}
If, in addition, $D_0\geq 2\beta$ then, for all
$k,n=1,2,...,$
\begin{equation}\label{eq:1suc-1sud}
\Big| \bE\Big(\frac{1}{c+D_{k+n}}-\frac{1}{d+D_{k+n}}\Big|{\mathcal
A}_n\Big) \Big|
\leq \frac{\beta - m + d}{m} \Big(\frac{1}{b_k}-\frac{1}{b_{k+1}}\Big)\,,
\end{equation}
when $d \geq c\geq 0$ and $b_k=c+D_n-\beta+mk$.
\end{lemma}
\begin{proof}
Let $\eta^*$ be a random variable independent of ${\mathcal A}_\infty$  and let
$\eta_1$ be a random variable independent of $\sigma({\mathcal A}_\infty,\eta^*)$ and such that
\(\eta_1/\beta\) has distribution Binomial(\(1,m/\beta\)).
Define ${\mathcal A}_{k+n^-}^*=\sigma(\eta^*,{\mathcal A}_{k+n-1},\I(k+n))$; by
\cite[Lemma~4.1]{AlettiMaySecchi08}, if $D>0$ is ${\mathcal
A}_{k+n^-}^*$-measurable
and $0\leq R\leq \beta$ with $\bE(R)=m$ is independent of ${\mathcal A}_{k+n^-}^*$, one obtains
\[
\bE\Big(\frac{1}{D+R}\Big|{\mathcal A}_{k+n^-}^*\Big) \leq
\frac{m}{\beta} \frac{1}{D+\beta} + \frac{\beta - m}{\beta} \frac{1}{D} =
\bE\Big(\frac{1}{D+\eta_1}\Big|{\mathcal A}_{k+n^-}^*\Big) ,
\]
and thus
\begin{multline}\label{eq:induc}
\bE\Big(\frac{1}{D_{k+n}+\eta^*}\Big| {\mathcal A}_{k+n^-}^*\Big) \\
\begin{aligned}
& =
\I(k+n)\bE\Big(\frac{1}{D_{k+n-1}+\eta^*+R_X(k+n)}\Big| {\mathcal
A}_{k+n^-}^*\Big) \\
& \qquad +(1-\I(k+n))\bE\Big(\frac{1}{D_{k+n-1}+\eta^*+R_Y(k+n)}\Big|
{\mathcal A}_{k+n^-}^*\Big)
\\
& \leq \I(k+n)\bE\Big(\frac{1}{D_{k+n-1}+\eta^*+\eta_1}\Big| {\mathcal
A}_{k+n^-}^*\Big) \\
& \qquad +(1-\I(k+n))\bE\Big(\frac{1}{D_{k+n-1}+\eta^*+\eta_1}\Big|
{\mathcal A}_{k+n^-}^*\Big)
\\
& = \bE\Big(\frac{1}{D_{k+n-1}+\eta^*+\eta_1}\Big| {\mathcal A}_{k+n^-}^*\Big).
\end{aligned}
\end{multline}
Therefore, for $c\geq 0$, by applying \eqref{eq:induc} $k$-times, we get
\begin{equation}\label{eq:1suc+Dk+n}
\bE\Big(\frac{1}{D_{k+n}+c}\Big| {\mathcal A}_n\Big) \leq
\bE\Big(\frac{1}{D_{n}+c+\eta_k}\Big|{\mathcal A}_n\Big)
\end{equation}
where $\eta_k$  is independent of
$\sigma({\mathcal A}_\infty)$ and \(\eta_k/\beta\) has distribution Binomial(\(k,m/\beta\)).
Equation \eqref{eq:1suDN} is now a consequence of \cite[Eq.~(21)]{Wooff85}:
if $\tilde{\eta_k}\sim\textrm{Binomial}(k,r)$ and $l>0$,
\begin{equation*}
\bE\Big(\frac{1}{l+\tilde{\eta_k}}\Big) \leq
\Big(1+\frac{1-r}{l}\Big)\frac{1}{l+kr+(1-r)}\,.
\end{equation*}
Apply this to \eqref{eq:1suc+Dk+n} with $n=0$,
$\tilde{\eta_k}=\eta_k/\beta$, $l={D_0}/{\beta}$
and $r={m}/{\beta}$ to obtain \eqref{eq:1suDN}.

Equation \eqref{eq:1suc-1sud} is a consequence of \cite[Eq.~(25)]{Wooff85}:
if $\tilde{\eta_k}\sim\textrm{Binomial}(k,r)$ and $l>1$,
\begin{equation*}
\bE\Big(\frac{1}{l+\tilde{\eta_k}}\Big) \leq \frac{1}{l+kr - (1-r)}\,.
\end{equation*}
Apply this to \eqref{eq:1suc+Dk+n} with
$\tilde{\eta_k}=\eta_k/\beta$, $l={D_n+c}/{\beta}$
(which is greater than $2$) and $r={m}/{\beta}$ to obtain
\[
\bE\Big(\frac{1}{c+D_{n+k}}\Big|{\mathcal A}_n\Big) \leq
\frac{1}{c+D_n+m(k+1)-\beta}.
\]
Jensen's inequality yields
\(\bE ((d+D_{n+k})^{-1} |{\mathcal A}_n ) \geq (d+D_n+mk)^{-1}\),
and thus
\[
\Big| \bE\Big(\frac{1}{c+D_{n+k}}-\frac{1}{d+D_{n+k}}\Big|{\mathcal
A}_n\Big) \Big|
\leq \frac{\beta-m+d-c}{(c+D_n+m(k+1)-\beta)(d+D_n+mk)}.
\]
Since
\[
\frac{1}{b_k}-\frac{1}{b_{k+1}} =
\frac{m}{(c+D_n-\beta+mk)(c+D_n-\beta+m(k+1))}
\]
we get \eqref{eq:1suc-1sud}:
\begin{align*}
  \frac{\Big| \bE\Big(\frac{1}{c+D_{n+k}}-\frac{1}{d+D_{n+k}}\Big|{\mathcal
A}_n\Big) \Big|}{
\frac{1}{b_k}-\frac{1}{b_{k+1}}}
&\leq \frac{\beta-m+d-c}{m} \frac{c+D_n-\beta+mk}{d+D_n+mk}
\\
&\leq \frac{\beta-m+d-c}{m}.
\end{align*}
\end{proof}

The following Lemma~\ref{sub_Martingale} and Lemma~\ref{bracket}
provide inequalities which control the previsible and the martingale
part of the process $Z_n$ respectively;
they require that the initial composition of the urn is
sufficiently large.
\begin{lemma}\label{sub_Martingale}
If $D_0\geq 2\beta$, then
\begin{equation*}
\bE( \sup_r |{A}_{r}| ) \leq \frac{\beta}{D_0}.
\end{equation*}
\end{lemma}
\begin{proof}
Apply \eqref{eq:1suc-1sud} with $n=0$, $c=m$, $d=\beta$.
Equation \eqref{eq:A^*_n} then reads
\[
\bE (|{A}^*_{k+1}| )\leq
(2\beta-m)\Big(
\frac{1}{b_{k}} - \frac{1}{b_{k+1}}
\Big),
\]
if $b_k=km+D_0-(\beta-m)$. Since $A_0=0$,
\begin{equation*}
\begin{aligned}
\bE( \sup_r |{A}_{r}| ) & \leq
\bE \Big( \sum_k
|\Delta {A}_{k+1}| \Big)
\leq
\sum_k \frac{1}{4}\bE (
|{A}^*_{k+1}| )
\\
&\leq
\frac{2\beta-m}{4}\sum_k
\Big(
\frac{1}{b_{k}} - \frac{1}{b_{k+1}}
\Big) = \frac{2\beta-m}{4}\frac{1}{D_0-(\beta-m)}
\\&
\leq
\frac{\beta}{D_0},
\end{aligned}
\end{equation*}
where the last inequality is true because $\beta-m\leq\beta\leq D_0/2$.
\end{proof}

\begin{lemma}\label{bracket}
Let $D_0\geq 2\beta$. For all $n\geq 0$,
\begin{equation*}
\bE(
\langle{M}\rangle_{\infty}-\langle{M}\rangle_{n}
|{{\mathcal A}_n})
\leq \frac{\beta}{D_0}.
\end{equation*}
\end{lemma}

\begin{proof}
Since $Z_{n+k}(1-Z_{n+k})\leq 1/4$, by \eqref{eq:diseqDeltaBraket}, one gets
\[
\Delta \langle{M}\rangle_{n+k+1} \leq
\frac{m\beta}{4(\beta+D_{n+k})^2}
\leq
\frac{m}{4}\Big(\frac{1}{D_{n+k}}
-\frac{1}{\beta+D_{n+k}}\Big)\,.
\]
Apply \eqref{eq:1suc-1sud} with $c=0$ and $d=\beta$, obtaining
\[
\bE(\Delta \langle{M}\rangle_{n+k+1} |{\mathcal A}_n)\leq
\frac{m}{4}\frac{2\beta-m}{m}\Big(
\frac{1}{b_{k}} - \frac{1}{b_{k+1}}
\Big),
\]
if $b_k=km+D_n-\beta$. Thus
\begin{equation*}
\begin{aligned}
\bE(
\langle{M}\rangle_{\infty}-\langle{M}\rangle_{n}
|{{\mathcal A}_n})
&=\bE\Big(
\sum_{k\ge0}
\Delta \langle{M}\rangle_{k+n+1}\Big|{{\mathcal A}_n}\Big)
\\
&\leq \frac{2\beta-m}{4}
\sum_{k\ge0}\Big(
\frac{1}{b_{k}}
- \frac{1}{b_{k+1}}
\Big)
\\&
\leq \frac{2\beta}{2}\frac{1}{2(D_n -\beta)}
\leq \frac{\beta}{D_0},
\end{aligned}
\end{equation*}
since $2(D_n -\beta)\geq 2(D_0 -\beta)\geq D_0$.
\end{proof}

\begin{small}\medskip
\noindent
\textbf{Acknowledgement.}
The authors thank two anonymous referees whose
useful comments helped to clarify some key points
of the paper.
\end{small}


\end{document}